\documentclass[2p]{article}
\usepackage{amssymb,amsmath,amsthm}
\usepackage{indentfirst}
\usepackage{exscale}
\usepackage{relsize}
\usepackage{subfigure}
\usepackage{graphicx}
\usepackage{epstopdf}
\usepackage{xcolor}
\usepackage[numbers,sort&compress]{natbib}

\usepackage{geometry}
\newcommand{\R}{\mathbb{R}}
\newtheorem{theorem}{Theorem}[section]

\theoremstyle{definition}
\newtheorem{definition}[theorem]{Definition}

\newtheorem{lemma}[theorem]{Lemma}

\newtheorem{corollary}[theorem]{Corollary}

\allowdisplaybreaks
\theoremstyle{remark}

\numberwithin{equation}{section}
\usepackage{amsmath}
\numberwithin{equation}{section}
\UseRawInputEncoding

\begin{document}
\title{\Large\bf{ Least energy sign-changing solutions for asymptotically cubic Kirchhoff equations on locally finite graphs}}
\date{}
\author {Shuai An$^1$, \ Junping Xie$^{2}$\footnote{Corresponding author, E-mail address: hnxiejunping@163.com},\ \  Xingyong Zhang$^{1,2}$\\
{\footnotesize $^1$Faculty of Science, Kunming University of Science and Technology,}\\
 {\footnotesize Kunming, Yunnan, 650500, P.R. China.}\\
 {\footnotesize $^2$Faculty of Transportation Engineering, Kunming University of Science and Technology,}\\
 {\footnotesize Kunming, Yunnan, 650500, P.R. China.}\\
{\footnotesize $^{3}$Research Center for Mathematics and Interdisciplinary Sciences, Kunming University of Science and Technology,}\\
 {\footnotesize Kunming, Yunnan, 650500, P.R. China.}\\
}
 \date{}
 \maketitle

 \begin{center}
 \begin{minipage}{15cm}
 \small  {\bf Abstract:} We obtain the existence results of least energy sign-changing solutions and ground state solutions for a class of asymptotically cubic Kirchhoff equations with Dirichlet boundary value on a locally finite graph $G=(V,E)$, and obtain the least energy of sign-changing solutions is strictly larger than twice of the ground state energy.
\par
{\bf Keywords:}  asymptotically cubic Kirchhoff equations, locally finite graphs, least energy sign-changing solutions, ground state solutions, non-Nehari manifold method.
 \end{minipage}
 \end{center}
  \allowdisplaybreaks
\vskip2mm
{\section{Introduction }}
\setcounter{equation}{0}
First, we recall some basic concepts and notations on graphs (\cite{Grigor 2017} and \cite{Grigor 2016}). Let $G=(V,E)$ be a graph, where $V$ and $E$ denote the vertex set and the edge set, respectively. The graph $G$ is called locally finite if each vertex $x\in V$ has only finitely many neighbors $y\in V$ satisfying $xy\in E$. Moreover, $G$ is connected if any two vertices can be joined by a path consisting of finitely many edges. We write $y\thicksim x$ whenever $xy\in E$. For each edge $xy\in E$, we assign a positive weight $\omega_{xy}$ satisfying
$
\omega_{xy}=\omega_{yx},\omega_{xy}>0.
$
The weighted degree of a vertex $x\in V$ is defined by $deg(x)=\sum_{y\thicksim x}\omega_{xy}.$
For any two vertices $x,y\in V$, the graph distance $d(x,y)$ is defined as the minimum number of edges in a path connecting $x$ and $y$. Let $\Omega\subset V$. If $d(x,y)$ is uniformly bounded for all $x,y\in\Omega$, then $\Omega$ is called a bounded domain in $V$. The boundary of $\Omega$ is defined by
\[
\partial\Omega=\{y\in V,\;y\notin\Omega|\;\exists\;x\in\Omega\;\text{such that}\;xy\in E\},
\]
and the interior of $\Omega$ is given by
$\Omega^{\circ}=\Omega\backslash\partial\Omega.$
It is easy to see that
$\Omega=\Omega^\circ,$
which is different from the corresponding situation in the Euclidean setting.

Assume that $\mu:V\rightarrow \mathbb{R}^{+}$ is a finite positive function.
The gradient form associated with the graph is defined by
\begin{eqnarray}
\label{eq2}
\Gamma(\psi_1,\psi_2)(x)
=
\frac{1}{2\mu(x)}
\sum\limits_{y\thicksim x}
w_{xy}(\psi_1(y)-\psi_1(x))
(\psi_2(y)-\psi_2(x))
:=\nabla\psi_1\cdot\nabla\psi_2 .
\end{eqnarray}
For $\Gamma(\psi,\psi)$, we simply write
$
\Gamma(\psi)=\Gamma(\psi,\psi),
$
and the length of the gradient is defined as
\begin{eqnarray}
\label{eq3}
|\nabla\psi|(x)
=
\sqrt{\Gamma(\psi)(x)}
=
\left(
\frac{1}{2\mu(x)}
\sum\limits_{y\thicksim x}
w_{xy}(\psi(y)-\psi(x))^2
\right)^{\frac12}.
\end{eqnarray}
For a function $\psi:\Omega\rightarrow\mathbb{R}$, the Laplacian operator $\Delta$ is given by
\begin{eqnarray}
\label{eq1}
\Delta\psi(x)
=
\frac{1}{\mu(x)}
\sum\limits_{y\thicksim x}
w_{xy}(\psi(y)-\psi(x)).
\end{eqnarray}
The integral with respect to the measure $\mu$ is defined by
\begin{eqnarray}
\label{eq4}
\int_{\Omega}\psi(x)d\mu
=
\sum\limits_{x\in\Omega}\mu(x)\psi(x).
\end{eqnarray}
In the distributional sense, the Laplacian operator satisfies
\begin{eqnarray*}
\int_{\Omega\cup\partial\Omega}(\Delta\psi)\phi d\mu
=
-\int_{\Omega\cup\partial\Omega}\Gamma(\psi,\phi)d\mu,
\end{eqnarray*}
for any $\psi\in C_c(\Omega)$, where $C_c(\Omega)$ denotes the space of real-valued continuous functions with compact support.
The Sobolev space $\mathcal{H}_{0}^{1,2}(\Omega)$ is defined by
\begin{eqnarray}
\label{b1}
\mathcal{H}_{0}^{1,2}(\Omega)
=
\left\{u\Big|
u:\Omega\rightarrow\mathbb{R},
\;
u|_{\partial\Omega}=0
\right\}.
\end{eqnarray}
It is equipped with the norm
\begin{eqnarray*}
\|u\|
=
\left(
\int_{\Omega\cup\partial\Omega}
|\nabla u(x)|^2d\mu
\right)^{\frac12}.
\end{eqnarray*}
Then $\mathcal{H}_{0}^{1,2}(\Omega)$ is a finite-dimensional Banach space.
For $2\leq r<+\infty$, the space $L^r(\Omega)$ is defined as
\[
L^r(\Omega)
=
\left\{u\Big|
u:\Omega\rightarrow\mathbb{R}
\right\},
\]
with the norm
\[
\|u\|_{L^r(\Omega)}
=
\left(
\int_{\Omega}|u(x)|^r d\mu
\right)^{\frac1r}.
\]
When $r=+\infty$, we define
\[
L^\infty(\Omega)
=
\left\{
u:\Omega\rightarrow\mathbb{R}:
\sup_{x\in\Omega}|u(x)|<\infty
\right\},
\]
and its norm is given by
\[
\|u\|_{L^\infty(\Omega)}
=
\max_{x\in\Omega}|u(x)|.
\]
\par
In recent years, partial differential equations on locally finite graphs have attracted increasing interest due to their applications in image processing, data analysis, and machine learning \cite{Cheung 2018,Chung 2005,Ta 2011,Elmoataz 2012}.
The existence of solutions for such equations has been extensively studied, and a variety of results have been obtained; see, for example, \cite{Pan 2023,Ou 2024,Yu 2025,Grigor 2017,Grigor 2016,Han 2021,Qiu 2023,Shao 2024,Zhang 2018}. A common feature of the existing results is that the nonlinear terms are usually required to satisfy suitable super-quartic growth conditions. For example, Pan-Ji \cite{Pan 2023} investigated the least energy sign-changing solutions for the nonlinear Kirchhoff equation
\[
-\left(a+b\int_V|\nabla u|^2\,d\mu\right)\Delta u
+c(x)u=f(u),
\qquad x\in V,
\]
on a locally finite graph $G=(V,E)$, where $a,b>0$ and $c:V\to\mathbb{R}$. By means of a constrained variational method, they obtained the existence of a least energy sign-changing solution. More precisely, their result was established under the assumptions that the nonlinear function $f$ satisfies the following conditions:

\noindent
{\it \(\mathrm{(C_1)}\)
\(\displaystyle \frac{F(u)}{u^4}\to+\infty\), as \(|u|\to\infty\), where
\(F(u)=\displaystyle\int_0^u f(s)\,ds\),}
and

\noindent
{\it \(\mathrm{(C_2)}\)
\(\displaystyle \frac{f(u)}{u}\to0\), as \(u\to0\).}

\noindent
Furthermore, they proved that the least energy level of sign-changing solutions is strictly larger than twice the least energy. Subsequently, Ou-Zhang \cite{Ou 2024} considered a more general class of Kirchhoff-type equations involving power-law terms, logarithmic nonlinearities, and Dirichlet boundary conditions on locally finite graphs. The problem they studied is given by
\[
\begin{cases}
-\left(
a+b\displaystyle\int_{\overline{\Omega}}|\nabla u|^2\,d\mu
\right)\Delta u
+\lambda g(x)u^{\frac{2k}{m}-1}
=
Q(x)|u|^{p-2}u\ln |u|^r,
& x\in\Omega^\circ,\\
u=0,
& x\in\partial\Omega,
\end{cases}
\]
where $\overline{\Omega}\subset V$ is a bounded domain,
$
\overline{\Omega}:=\Omega^\circ\cup\partial\Omega
=\Omega\cup\partial\Omega,
$
$a>0$, $b\geq0$, $p>4$, $\lambda\geq0$, $r\geq1$, and $m,k\in\mathbb{N}$ satisfy
$
1<\frac{2k}{m}\leq p,
$
with
$
Q,g\in C\bigl(\Omega^\circ,(0,\infty)\bigr).
$
They also established a corresponding existence result for least energy sign-changing solutions. However, both works \cite{Pan 2023,Ou 2024} rely on the super-quartic growth assumption $\mathrm{(C_1)}$ at infinity for the nonlinear term.

Recently, Kirchhoff equations on locally finite graphs with different types of nonlinearities have attracted considerable attention. In particular, Yu-Zhang-Ou \cite{Yu 2025} studied the least energy solutions of two Kirchhoff equations with asymptotically cubic nonlinear terms on a weighted, connected, and locally finite graph $G=(V,E)$. The nonlinear term appearing in their models is
$
\lambda u+\eta |u|^2u,
$
which has asymptotically cubic growth at infinity and satisfies neither $\mathrm{(C_1)}$ nor $\mathrm{(C_2)}$.
The two Kirchhoff equations studied in their work are formulated as follows:
\[
\begin{cases}
-\left(a+b\displaystyle\int_{\Omega}|\nabla u(x)|^2\,d\mu\right)\Delta u(x)
=\lambda u(x)+\eta|u(x)|^2u(x),
& x\in\Omega,\\[1mm]
u(x)=0,
& x\in\partial\Omega.
\end{cases}
\]
and
\[
-\left(
a+b\int_V\left(|\nabla u(x)|^2+h(x)u^2(x)\right)d\mu
\right)
\left(\Delta u(x)+h(x)u(x)\right)
=
\lambda u(x)+\eta|u(x)|^2u(x),
\quad x\in V.
\]
Here, $\Omega\cup\partial\Omega\subset V$ is a bounded domain,
$a>0$, $b\geq0$, $\lambda,\eta\in\mathbb{R}$, $u:V\to\mathbb{R}$, and $h:V\to\mathbb{R}$.
Using the constrained variational method, they proved the existence of least energy solutions for both equations. More precisely, there exist constants $\lambda_1\geq0$ and $\eta_0\geq0$ (respectively, $\lambda_1^*\geq0$ and $\eta_0^*\geq0$) such that the first equation admits at least one least energy solution whenever
$
|\lambda|<a\lambda_1, \mbox{ and } \eta>\eta_0,
$
while the second equation admits at least one least energy solution whenever
$
|\lambda|<a\lambda_1^*, \mbox{ and }  \eta>\eta_0^*.
$
However, the existence of least energy sign-changing solutions was not investigated in \cite{Yu 2025}. This is mainly due to the fact that, on locally finite graphs, the decomposition
$
\|u\|^2=\|u^+\|^2+\|u^-\|^2
$
contains additional terms compared with the Euclidean setting; see Proposition 3.2 in \cite{Ou 2024}. Consequently, it is difficult to obtain results similar to those in \cite{Li 2022} and \cite{Khoutir 2021} where the existence of least energy sign-changing solutions for Kirchhoff equations and Schr\"odinger-Poisson systems was studied in the continuous setting. Thus, a natural question is whether the existence result on the least energy sign-changing solutions for Kirchhoff equations on locally finite graphs can be obtained  when the nonlinear term satisfies asymptotically cubic condition at infinity?

\par
In this paper, we focus on  the following Kirchhoff-type equation with Dirichlet boundary value condition on a locally finite graph $G=(V,E)$:
\begin{eqnarray}
\label{eq1}
 \begin{cases}
  -\left(a+b\displaystyle\int_{\overline{\Omega}}|\nabla u|^2d\mu\right)\Delta u=f(u),& \text {in} \; \Omega^\circ,\\
  u=0,&\text {on} \; \partial\Omega,\\
   \end{cases}
\end{eqnarray}
where $\overline{\Omega} \subset V $ is a bounded domain, $\overline{\Omega}:=\Omega^\circ\cup\partial \Omega=\Omega\cup\partial \Omega$, $a>0$, $b\geq0$. Throughout this paper, we make the following assumptions:

{\it \noindent $(F_1)$ $f\in C(\mathbb{R},\mathbb{R})$, and there exist constants $C_0>0$ and $p\in[4,+\infty)$ such that
\[
|f(t)|\le C_0\left(1+|t|^{p-1}\right), \forall\, t\in\mathbb{R};
\]

\noindent $(F_2)$ $f(t)=o(t)$ as $|t|\to0$;

\noindent $(F_3)$ $f(t)=lt^3+f_\infty(t)$, where $l>0$ and $f_\infty(t)$ satisfies

\noindent $(I)$ $f_\infty(t)=o(t^3)$ as $|t|\to\infty$, and

\noindent $(II)$ there exists $\theta_0\in(0,1)$ such that for all
$t>0$, and $\tau\in\mathbb{R}\setminus\{0\}$,
\[
\left[
\frac{f_\infty(\tau)}{\tau^3}
-
\frac{f_\infty(t\tau)}{(t\tau)^3}
\right]\operatorname{sign}(1-t)
+
\frac{a\theta_0\lambda_1|1-t^2|}{(t\tau)^2}
\ge0,
\]
where $\lambda_1$ is the first eigenvalue of $(-\Delta,\mathcal{H}_0^{1,2}(\Omega))$.

\noindent $(F_4)$ $l>b\mu_0$, where
\[
\mu_0
=
\inf_{\substack{u\in \mathcal{H}_0^{1,2}(\Omega)\\ u^+\neq0,\;u^-\neq0}}
\max\left\{
\frac{\|u\|^2\displaystyle\int_{\Omega\cup\partial\Omega}\Gamma(u,u^+)\,d\mu}
{\displaystyle\int_{\Omega}|u^+|^4\,d\mu},
\
\frac{\|u\|^2\displaystyle\int_{\Omega\cup\partial\Omega}\Gamma(u,u^-)\,d\mu}
{\displaystyle\int_{\Omega}|u^-|^4\,d\mu}
\right\}.
\]
and
$$
u^+(x):=\max\{u(x),0\}\;\;\text{and}\;\; u^-(x):=\min\{u(x),0\}.
$$
}
It is easy to see that $(F_3)$ implies that the nonlinear term $f$ satisfies the  asymptotically cubic condition at infinity. We shall establish the existence results of least energy sign-changing solutions and least energy solutions for problem (\ref{eq1}) by using the method and tools developed in \cite{Cheng 2017} and \cite{Ou 2024}, and show that the least energy of sign-changing solutions is strictly larger than twice the ground state energy.

\par
We have to address that the existence and energy properties of solutions to Kirchhoff-type equations in the continuous setting have been extensively studied. For the related works in this direction, we refer readers to Refs. \cite{Feng 2021,Xie 2020,Shuai 2015,Cheng 2017,Tang 2016,H 2015,Sun 2019,Han 2021}. Especially, our work is inspired by \cite{Cheng 2017} greatly, where Cheng-Tang studied the least energy sign-changing solutions to the following asymptotically cubic Kirchhoff-type problem:
\[
\begin{cases}
-\left(
a+b\displaystyle\int_{\Omega}|\nabla u|^2\,dx
\right)\Delta u=f(x,u),
& x\in\Omega,\\[1mm]
u=0,
& x\in\partial\Omega.
\end{cases}
\]
Here $\Delta$ denotes the classical Laplacian operator, that is, $\Delta=\sum_{i=1}^N\frac{\partial^2}{\partial x^2}$,  \(\Omega\subset\mathbb R^N\) is a bounded domain with smooth boundary, \(N=1,2,3\) and \(a,b>0\). They used the non-Nehari manifold method developed by \cite{Shuai 2015} and \cite{Tang 2016} to prove the existence of a ground-state sign-changing solution when \(f\) is assumed to satisfy the following conditions:

\noindent
{\it\(\mathrm{(H_1)}\)
\(f \in C(\Omega \times \mathbb{R},\mathbb{R})\), and there exist constants \(C_0>0\) and \(p\in[4,2^*)\) such that:
\[
|f(x,t)|\leq C_0(1+|t|^{p-1}), \forall (x,t)\in \Omega\times\mathbb{R},
\]
where \(2^*=6\) if \(N=3\), and \(2^*=+\infty\) if \(N=1,2\);}

\noindent
\(\mathrm{(H_2)}\)
\(f(x,t)=o(t)\) uniformly in \(x\in\Omega\) as \(|t|\to0\);

\noindent
{\it\(\mathrm{(H_3)}\)
\(f(x,t)=lt^3+f_\infty(x,t)\), where \(l>0\) and
\(f_\infty(x,t)\) satisfies

\noindent
\(\mathrm{( I )}\)
\(f_\infty(x,t)=o(t^3)\) uniformly in \(x\in\Omega\) as
\(|t|\to\infty\), and

\noindent
\(\mathrm{(II)}\)
there exists a \(\theta_0\in(0,1)\) such that for all \(x\in\Omega\), \(t>0\), and
\(\tau\in\mathbb R\setminus\{0\}\)
\[
\begin{aligned}
\left[
\frac{f_\infty(x,\tau)}{\tau^3}
-
\frac{f_\infty(x,t\tau)}{(t\tau)^3}
\right]
\operatorname{sign}(1-t)
+
\frac{a\theta_0\lambda_1|1-t^2|}{(t\tau)^2}
\geq0,
\end{aligned}
\]
where \(\lambda_1\) is the first eigenvalue of
the Laplacian operator $(-\Delta, H_0^{1}(\Omega))$, and}
$$
H_0^{1}(\Omega)=\left\{u:\Omega\to\R| \int_\Omega |u|^2dx<\infty, \int_\Omega |\nabla u|^2dx<\infty\right\}.
$$

\noindent
They also showed  that the sign-changing least energy is strictly larger than twice that of the least energy. It is worth noting that the condition \((H_3)\) is much weaker than the standard  Nehari-type monotonicity condition in \cite{Shuai 2015}. One of the purpose in our paper is to develop the results in \cite{Cheng 2017} from the continuous setting to the locally finite graph setting. Different from the continuous setting, the graph Laplacian operator is determined by the interactions between adjacent vertices, which makes some analysis different from that in the Euclidean setting.

\par
To present our results clearly, we define the functional $I:\mathcal{H}_{0}^{1,2}(\Omega)\to\R$ as
\begin{eqnarray}
\label{b3}
    I(u)
&=& \frac{a}{2}\int_{\Omega\cup \partial \Omega} |\nabla u|^2d\mu
       +\frac{b}{4}\left(\int_{\Omega\cup \partial \Omega} |\nabla u|^2d\mu\right)^2
 -\int_{\Omega}F(u)d\mu,
\end{eqnarray}
where $F(u)=\int_0^uf(t)dt$. A standard argument implies that $I\in C^1(\mathcal{H}^{1,2}_0(\Omega),\R)$ (for example, see \cite{Yang 2024}) and
\begin{eqnarray}
\label{b4}
    \langle I'(u),v\rangle
&=& a\int_{\Omega\cup \partial \Omega}\nabla u\cdot\nabla vd\mu
      +b\int_{\Omega\cup \partial \Omega} |\nabla u|^2d\mu\int_{\Omega\cup \partial \Omega}\nabla u\cdot\nabla v d\mu-\int_{\Omega}f(u)vd\mu
\end{eqnarray}
for any $u, v\in \mathcal{H}^{1,2}_0(\Omega)$.

\par
\vskip2mm
\noindent
{\bf Definition 1.1.} {\it If $u\in \mathcal{H}^{1,2}_0(\Omega)$ is a solution of equation (\ref{eq1}) and $u^\pm\not\equiv 0$, then $u$ is a sign-changing solution of (\ref{eq1}).}

\vskip2mm
\noindent

 Next, we present our main result.
 \vskip2mm
\noindent
{\bf Theorem 1.2.} {\it Assume that $G=(V,E)$ is a locally finite graph satisfying
$\Omega^\circ\neq\emptyset$ and $\partial\Omega\neq\emptyset$.
Then equation (\ref{eq1}) has a sign-changing solution $\tilde{u}\in\mathcal{M}$ and a constant-sign solution $\bar{u}\in\mathcal{N}$ such that $
I(\tilde{u})=\inf_{\mathcal{M}}I=:m,
I(\bar{u})=\inf_{\mathcal{N}}I=:c,
$
where
\[
\mathcal{M}=\left\{
u\in \mathcal{H}^{1,2}_0(\Omega),\;u^\pm\neq0,\;
\langle I'(u),u^+\rangle=0,\;
\langle I'(u),u^-\rangle=0
\right\},
\]
and
\[
\mathcal{N}=\left\{
u\in \mathcal{H}^{1,2}_0(\Omega),\;u\neq0,\;
\langle I'(u),u\rangle=0
\right\}.
\]
Moreover, the corresponding energy levels satisfy $m>2c.$}

\vskip2mm
\noindent
{\bf Remark 1.3.}  We would like to point out that our assumptions do not cover the asymptotically cubic nonlinearity
$\lambda u+\eta|u|^2u$ in \cite{Yu 2025} due to the presence of $(F_2)$; therefore, for this particular nonlinearity in \cite{Yu 2025}, the existence of the least energy sign-changing solution still remains open. Nevertheless, Theorem 1.1 provides, to the best of our knowledge, the first existence result on the least energy sign-changing solution for Kirchhoff equations with asymptotically cubic nonlinearity on locally finite graphs (even in the degenerate case
$b=0$).  Moreover,  in the continuous setting, when $N\geq 3$, the embedding $H_0^1(\Omega)\hookrightarrow L^p(\Omega)$ is compact only for $p<2^*:=\frac{2N}{N-2}$, so that the nonlinearity is usually required to have subcritical growth,  namely, \(\mathrm{(H_1)}\) holds. In contrast, since $\Omega$ is a bounded domain in a locally finite graph and $\mathcal{H}_0^{1,2}(\Omega)$ is finite-dimensional, continuously and compactly embedded into $L^q(\Omega)$ for every $1\leq q\leq+\infty$ so that in the present discrete graph setting, there is no finite Sobolev critical exponent imposing an upper bound on the polynomial growth of the nonlinearity, that is, $p\in[4,+\infty)$ in $(F_1)$.
\vskip2mm
\noindent
{\bf Remark 1.4.} There exist  examples satisfying our results.
\par
{\it Example 1. } For any $l>b\mu_0$ and $c>0$, let
\[
f_\infty(t)
=
-\frac{ct^3}{1+t^2},
\quad
f(t)
=
lt^3-\frac{ct^3}{1+t^2}.
\]
It is easy to see that $f(t)$ satisfies $(F_1)$ and $(F_2)$, while
$f_\infty(t)$ satisfies $(F_3)(I)$. Moreover,
\[
\left[
\frac{f_\infty(\tau)}{\tau^3}
-
\frac{f_\infty(t\tau)}{(t\tau)^3}
\right]
\operatorname{sign}(1-t)
=
\frac{c\tau^2|1-t^2|}
{(1+\tau^2)(1+t^2\tau^2)}
\ge 0.
\]
Thus, $(F_3)(II)$ holds for any $\theta_0\in(0,1)$. Hence, $f(t)$ satisfies conditions $(F_1)-(F_4)$.

\par
{\it Example 2.}
For any $l>b\mu_0$ and $\beta\in(0,a\lambda_1)$, let
\[
f_\infty(t)
=
\frac{\beta t^3}{1+t^4},
\quad
f(t)
=
lt^3+\frac{\beta t^3}{1+t^4}.
\]
Clearly, $f(t)$ satisfies $(F_1)$ and $(F_2)$, while $f_\infty(t)$ satisfies $(F_3)(I)$. Since $0<\beta<a\lambda_1$, there exists $\theta_0\in\left[\frac{\beta}{a\lambda_1},1\right)$.
For all $t>0$ and $\tau\in\mathbb{R}\setminus\{0\}$, noting that
\[
t^2\tau^6(1+t^2)
\le
(1+\tau^4)(1+t^4\tau^4),
\]
we have
\[
\begin{aligned}
&\left[
\frac{f_\infty(\tau)}{\tau^3}
-
\frac{f_\infty(t\tau)}{(t\tau)^3}
\right]
\operatorname{sign}(1-t)
+
\frac{a\theta_0\lambda_1|1-t^2|}{(t\tau)^2}
\\
&=
\frac{|1-t^2|}{t^2\tau^2}
\left[
a\theta_0\lambda_1
-
\frac{\beta t^2\tau^6(1+t^2)}
{(1+\tau^4)(1+t^4\tau^4)}
\right]
\\
&\ge
\frac{|1-t^2|}{t^2\tau^2}
\left(
a\theta_0\lambda_1-\beta
\right)
\ge 0.
\end{aligned}
\]
Thus, $(F_3)(II)$ holds, and hence $f(t)$ satisfies conditions $(F_1)-(F_4)$.

{\it Example 3.}
For any $l>b\mu_0$ and $\eta>0$, let
\[
f_\infty(t)
=
-\eta t^3e^{-t^2},
\quad
f(t)
=
lt^3-\eta t^3e^{-t^2}.
\]
It follows directly that $f(t)$ satisfies $(F_1)$ and $(F_2)$, while
$f_\infty(t)$ satisfies $(F_3)(I)$. Moreover, for all $t>0$ and
$\tau\in\mathbb{R}\setminus\{0\}$,
\[
\left[
\frac{f_\infty(\tau)}{\tau^3}
-
\frac{f_\infty(t\tau)}{(t\tau)^3}
\right]
\operatorname{sign}(1-t)
=
\eta
\left(
e^{-t^2\tau^2}-e^{-\tau^2}
\right)
\operatorname{sign}(1-t)
\ge 0.
\]
Thus, $(F_3)(II)$ holds for any $\theta_0\in(0,1)$.
Hence, we conclude that $f(t)$ satisfies conditions
$(F_1)-(F_4)$.

\vskip2mm
{\section{Preliminaries }}
\setcounter{equation}{0}
\noindent
\begin{lemma}\label{lem 6.6} (\cite{Grigor 2016}) {\it Let $G=(V,E)$ be a locally finite graph and $\Omega$ be a bounded domain satisfying $\Omega^\circ\not=\emptyset$. Then $\mathcal{H}^{1,2}_0(\Omega)\hookrightarrow L^q(\Omega)$ for all $1\leq q\leq+\infty$. Especially, if $1\leq q\leq +\infty$, then for all $\psi\in \mathcal{H}^{1,2}_0(\Omega)$,
\begin{eqnarray}
\label{b2}
\|\psi\|_{L^q(\Omega)}\leq K_q\|\psi\|,
\end{eqnarray}
where
$$
K_q=\frac{\left(\sum_{x\in V}\mu(x)\right)^{\frac{1}{q}}}{\mu_{\min}^{\frac{1}{2}}}, \ \mu_{\min}=\min_{x\in\Omega}\mu(x).
$$
In addition, $\mathcal{H}^{1,2}_0(\Omega)$ is pre-compact, namely, if $\{\psi_k\}$ is bounded in $\mathcal{H}^{1,2}_0(\Omega)$, then up to a subsequence, there exists some $\psi\in \mathcal{H}^{1,2}_0(\Omega)$ such that $\psi_k\rightarrow \psi$ in $\mathcal{H}^{1,2}_0(\Omega)$.}
\end{lemma}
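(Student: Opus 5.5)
The plan rests on the fact that $\mathcal{H}^{1,2}_0(\Omega)$ is finite dimensional. Indeed, $\overline{\Omega}$ is bounded and $G$ is locally finite, so $\Omega$ has finitely many vertices; we take this finiteness as the meaning of ``bounded'' here, because the constant $K_q$ implicitly requires $\sum\mu(x)$ to be finite over the relevant vertices. A function $\psi\in\mathcal{H}^{1,2}_0(\Omega)$ is therefore a vector $(\psi(x))_{x\in\Omega}$, extended by zero on $\partial\Omega$. I would argue in three steps: show that $\|\cdot\|$ is a genuine norm, prove the pointwise bound that gives (\ref{b2}), and deduce precompactness from finite dimensionality.

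\emph{Step 1 (the norm is definite).} Suppose $\|\psi\|=0$. Then $\psi(y)=\psi(x)$ for every edge $xy$ with $x\in\Omega\cup\partial\Omega$. Since $\partial\Omega\neq\emptyset$ (more generally, since each component of $\Omega$ meets the boundary through some edge), connectivity propagates the value $\psi=0$ from $\partial\Omega$ to all of $\Omega$. So $\psi\equiv0$.

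\emph{Step 2 (the $L^\infty$ bound).} Fix $x_0\in\Omega$ with $|\psi(x_0)|=\|\psi\|_{L^\infty(\Omega)}$, and pick a path $x_0=z_0\sim z_1\sim\cdots\sim z_k\in\partial\Omega$. Telescoping along the path and applying Cauchy--Schwarz gives
\[
|\psi(x_0)|^2\le k\sum_{i}\big(\psi(z_{i+1})-\psi(z_i)\big)^2 .
\]
Each squared difference is controlled by $\frac{2}{w_{z_iz_{i+1}}}\,\mu(z_i)\Gamma(\psi)(z_i)$, and hence by a multiple of $\|\psi\|^2$. This yields $\|\psi\|_\infty\le C\|\psi\|$ with a constant depending on the path lengths and the weights. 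To obtain the precise constant $K_q$ stated in the lemma, one would instead follow Grigor'yan--Lin--Yang's argument, reading the bound as $\mu(x_0)|\psi(x_0)|^2\lesssim\|\psi\|^2$, which produces the factor $\mu_{\min}^{-1/2}$. Since the statement is quoted from \cite{Grigor 2016}, I would cite it for the exact constant and present the argument only to show that some finite constant exists. The $L^q$ bound then follows from
\[
\|\psi\|_{L^q}^q=\sum_{x\in\Omega}\mu(x)|\psi(x)|^q\le\Big(\sum_{x}\mu(x)\Big)\|\psi\|_\infty^q .
\]

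\emph{Step 3 (precompactness).} A bounded sequence in a finite-dimensional normed space has a convergent subsequence by Bolzano--Weierstrass, and the limit satisfies $\psi|_{\partial\Omega}=0$. The main obstacle is matching the exact form of $K_q$ in Step 2, since the path argument naturally produces weight-dependent constants. For this reason I would rely on the citation for the explicit constant, while the qualitative embedding and compactness, which is all the paper later uses, follow from the argument above.
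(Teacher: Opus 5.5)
Your Steps 1 and 3 are fine. Step 2 also gives a valid constant: telescoping along a shortest path from $x_0$ to $\partial\Omega$ yields $\|\psi\|_{L^\infty(\Omega)}^2\le \frac{2D}{w_{\min}}\|\psi\|^2$. Here $D=\max_{x\in\Omega}d(x,\partial\Omega)$ and $w_{\min}$ is the smallest weight of an edge meeting $\Omega$. Alternatively, Step 1 together with equivalence of norms on a finite-dimensional space already gives some constant.

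The gap is in how you dispose of the explicit $K_q$. The bound you attribute to the cited source is $\mu(x_0)|\psi(x_0)|^2\lesssim\|\psi\|^2$, and it would have to hold with constant one to produce exactly $\mu_{\min}^{-1/2}$. That bound is not available. The trivial pointwise estimate is $\mu_{\min}\|\psi\|_{L^\infty(\Omega)}^2\le\|\psi\|_{L^2(\Omega)}^2$, with the $L^2$ norm on the right. Passing to the gradient norm requires the Poincar\'e inequality $\lambda_1\|\psi\|_{L^2(\Omega)}^2\le\|\psi\|^2$; here $\lambda_1>0$ by your Step 1 applied to a minimizer of the Rayleigh quotient. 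That route gives the constant $\big(\sum_{x\in\Omega}\mu(x)\big)^{1/q}(\mu_{\min}\lambda_1)^{-1/2}$, not $K_q$.

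In fact no argument can give the displayed $K_q$, because the inequality with that constant is false. $K_q$ depends only on $\mu$, whereas replacing every $w_{xy}$ by $\varepsilon w_{xy}$ leaves $\|\psi\|_{L^q(\Omega)}$ unchanged and multiplies $\|\psi\|$ by $\sqrt{\varepsilon}$. Concretely, let $V=\{x_0,y_0\}$ with a single edge of weight $w$, $\mu\equiv1$, $\Omega=\{x_0\}$, $\partial\Omega=\{y_0\}$ and $\psi(x_0)=1$. Then $\|\psi\|^2=w$, $\|\psi\|_{L^q(\Omega)}=1$ and $K_q=2^{1/q}$. So $\|\psi\|_{L^q(\Omega)}\le K_q\|\psi\|$ fails for every $q\in[1,+\infty]$ as soon as $w<1/4$.

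So deferring to the citation for the exact constant does not close the proof. What you can, and should, prove is the lemma with a weight- and geometry-dependent constant, either yours or the $\lambda_1$-version above. It should use $\sum_{x\in\Omega}\mu(x)$ in place of $\sum_{x\in V}\mu(x)$, since the latter may diverge when $V$ is infinite, making the bound vacuous for $q<\infty$. Your proposal should state this correction explicitly rather than defer to the citation. As you note, the correction is harmless downstream: the paper only uses $K_2$ and $K_p$ as some finite constants (for instance to choose $\xi_1<a/K_2^2$). It invokes the Poincar\'e inequality separately wherever $\lambda_1$ matters.
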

\par

\vskip2mm
\begin{definition}
{\it A function \(u\) is a weak solution to equation \((\ref{eq1})\) provided that it satisfies the following identity:
\begin{equation}
\label{b5}
a\int_{\Omega\cup\partial\Omega}\nabla u\cdot\nabla v\,d\mu
+b\int_{\Omega\cup\partial\Omega}|\nabla u|^2\,d\mu
\int_{\Omega\cup\partial\Omega}\nabla u\cdot\nabla v\,d\mu
=
\int_{\Omega}f(u)v\,d\mu
\end{equation}
for any \(v\in\mathcal{H}^{1,2}_0(\Omega)\).}
\end{definition}

It follows from the definition of weak solutions that the critical points of \(I\) in \(\mathcal{H}_0^{1,2}(\Omega)\) are precisely the weak solutions of (\ref{eq1}).
\vskip2mm
\noindent
\begin{lemma}(\cite{Ou 2024})
{\it If \(u\in \mathcal{H}^{1,2}_0(\Omega)\) is a weak solution of equation \((\ref{eq1})\), then \(u\) is a point-wise solution of \((\ref{eq1})\).}
\end{lemma}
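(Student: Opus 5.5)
The plan is to test the weak formulation \eqref{b5} against Dirac-type functions concentrated at a single vertex. Since $\overline{\Omega}$ is bounded and $G$ is locally finite, every sum below is finite, so no convergence issues arise. Let $u\in\mathcal{H}^{1,2}_0(\Omega)$ be a weak solution. The boundary condition $u=0$ on $\partial\Omega$ then holds automatically, by the definition \eqref{b1} of $\mathcal{H}^{1,2}_0(\Omega)$. It remains to verify the equation at each vertex $x_0\in\Omega^\circ$ (recall $\Omega^\circ=\Omega$).

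Fix $x_0\in\Omega$ and set $v=\delta_{x_0}$, i.e. $v(x_0)=1$ and $v(x)=0$ for $x\neq x_0$. Then $v|_{\partial\Omega}=0$, so $v\in\mathcal{H}^{1,2}_0(\Omega)$ is an admissible test function. The key step is to compute
\[
\int_{\Omega\cup\partial\Omega}\nabla u\cdot\nabla v\,d\mu=\frac12\sum_{x\in\Omega\cup\partial\Omega}\sum_{y\sim x}\omega_{xy}\bigl(u(y)-u(x)\bigr)\bigl(v(y)-v(x)\bigr)
\]
directly. One could instead appeal to the integration-by-parts identity from the introduction, but the direct computation is safer. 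A term in this sum is nonzero only when the edge $xy$ contains $x_0$. Every neighbor $y$ of $x_0$ lies in $\Omega\cup\partial\Omega$, by the definition of $\partial\Omega$. Hence each edge $x_0y$ is counted exactly twice: once with $x=x_0$ and once with $x=y$. Both contributions equal $-\omega_{x_0y}(u(y)-u(x_0))$, using $\omega_{xy}=\omega_{yx}$. Consequently
\[
\int_{\Omega\cup\partial\Omega}\nabla u\cdot\nabla v\,d\mu=-\sum_{y\sim x_0}\omega_{x_0y}\bigl(u(y)-u(x_0)\bigr)=-\mu(x_0)\Delta u(x_0).
\]
On the right-hand side of \eqref{b5}, $\int_\Omega f(u)v\,d\mu=\mu(x_0)f(u(x_0))$.

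Substituting both computations into \eqref{b5} gives
\[
-\mu(x_0)\Bigl(a+b\int_{\overline{\Omega}}|\nabla u|^2d\mu\Bigr)\Delta u(x_0)=\mu(x_0)f(u(x_0)).
\]
Dividing by $\mu(x_0)>0$ yields the equation of \eqref{eq1} at $x_0$. Since $x_0\in\Omega^\circ$ was arbitrary, $u$ is a pointwise solution. The only delicate point is the bookkeeping in the edge sum: one must check that no edge at $x_0$ leaves $\Omega\cup\partial\Omega$, so that the factor $\frac12$ is exactly cancelled by the double counting. This is guaranteed by the definition of $\partial\Omega$.
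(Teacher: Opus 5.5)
Your proof is correct and is essentially the standard argument the paper defers to (it gives no proof of its own, only the citation to Ou--Zhang): test \eqref{b5} with the indicator function of a single vertex $x_0\in\Omega$ and turn $\int_{\Omega\cup\partial\Omega}\nabla u\cdot\nabla v\,d\mu$ into $-\mu(x_0)\Delta u(x_0)$, leaving $-\bigl(a+b\|u\|^2\bigr)\Delta u(x_0)=f(u(x_0))$ after dividing by $\mu(x_0)>0$. Your edge-counting, which uses that every neighbor of $x_0$ lies in $\Omega\cup\partial\Omega$, just checks by hand the discrete Green formula stated in the introduction, so nothing is missing.
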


\vskip2mm
{\section{Proofs}}
\setcounter{equation}{0}
In this section, we shall complete the proofs of Theorem 1.1 by  a series of lemmas.
\par
Next, for any $u\in\mathcal{H}_0^{1,2}(\Omega)$, let
\begin{eqnarray*}
\Omega^+:=\{x\in\Omega:u(x)>0\},\;\;\Omega^-:=\{x\in\Omega:u(x)<0\}\;\;\text{and}\;\;
\Omega^\circ:=\{x\in\Omega\cup\partial\Omega:u(x)=0\}
\end{eqnarray*}
and we denote
\begin{eqnarray*}
\mathcal{W}_u:=
-\sum_{x\in\Omega^{-}}\sum_{\substack{y\sim x\\ y\in\Omega^{+}}}
w_{xy}u^{-}(x)u^{+}(y)
-\sum_{x\in\Omega^{+}}\sum_{\substack{y\sim x\\ y\in\Omega^{-}}}
w_{xy}u^{-}(y)u^{+}(x).
\end{eqnarray*}
It is easy to see that  $\mathcal{W}_u>0$ for any $u\in\mathcal{H}_0^{1,2}(\Omega)$.
\begin{lemma}\label{lem3.2}
{\it Suppose that \((F_1)-(F_4)\) hold. For all \(u\in \mathcal{H}_0^{1,2}(\Omega)\) and \(s,t\geq 0\), it holds that}
\begin{align*}
I(u) \geq\;& I(su^{+}+tu^{-})
+\frac{1-s^{4}}{4}\left\langle I'(u),u^{+}\right\rangle
+\frac{1-t^{4}}{4}\left\langle I'(u),u^{-}\right\rangle  \\
&+\frac{a(1-\theta_{0})}{4}(1-s^{2})^{2}\|u^{+}\|^{2}
+\frac{a(1-\theta_{0})}{4}(1-t^{2})^{2}\|u^{-}\|^{2}
+\frac{a(s-t)^{2}}{4}\mathcal{W}_u
+\frac{b(s^{2}-t^{2})^{2}}{4}\|u^{+}\|^{2}\|u^{-}\|^{2}  \\
&+\frac{b}{8}(s-t)^{2}(3s^{2}+2st+t^{2})\|u^{+}\|^{2}
\mathcal{W}_u
+\frac{b}{8}(s-t)^{2}(3t^{2}+2st+s^{2})\|u^{-}\|^{2}
\mathcal{W}_u
+\frac{b(s^{2}-t^{2})^{2}}{8}
\mathcal{W}_u^2.
\end{align*}
\end{lemma}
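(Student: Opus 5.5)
The plan is a direct computation. I will split $I(u)-I(su^++tu^-)$ into three pieces, the $a$-part, the $b$-part and the nonlinear part, subtract the matching pieces of $\frac{1-s^4}{4}\langle I'(u),u^+\rangle+\frac{1-t^4}{4}\langle I'(u),u^-\rangle$, and bound each remainder from below separately.

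\emph{Norm decomposition.} First I would rewrite every norm via the graph decomposition. Since $u^+$ and $u^-$ have disjoint supports, $\Gamma(u^+,u^-)(x)$ only receives contributions from edges $xy$ with $x\in\Omega^-$, $y\in\Omega^+$ or vice versa. Each such edge contributes $-u^+(y)u^-(x)>0$ (resp.\ $-u^+(x)u^-(y)$). Hence $2\int\Gamma(u^+,u^-)\,d\mu$ equals $\mathcal{W}_u$, up to the normalizing constant fixed by the $\frac1{2\mu}$ in (\ref{eq2}); I would fix this constant once and for all. This gives
\[
\|su^++tu^-\|^2=s^2\|u^+\|^2+t^2\|u^-\|^2+st\,\mathcal{W}_u ,\qquad \|u\|^2=\|u^+\|^2+\|u^-\|^2+\mathcal{W}_u ,
\]
and also $\int\Gamma(u,u^\pm)\,d\mu=\|u^\pm\|^2+\tfrac12\mathcal{W}_u$. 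With these, $\langle I'(u),u^\pm\rangle$ becomes an explicit expression in $\|u^+\|^2,\|u^-\|^2,\mathcal{W}_u$ minus $\int_\Omega f(u)u^\pm\,d\mu$.

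\emph{Nonlinear part.} Because the supports are disjoint, $F(su^++tu^-)=F(su^+)+F(tu^-)$ pointwise. So it suffices to show, for every $\tau\neq0$ and $s\ge0$,
\[
F(\tau)-F(s\tau)-\tfrac{1-s^4}{4}f(\tau)\tau\ \ge\ -\tfrac{a\theta_0\lambda_1}{4}(1-s^2)^2\tau^2 .
\]
The $lt^3$ part contributes exactly zero. For the $f_\infty$ part I would follow Cheng--Tang. Write $F_\infty(\tau)-F_\infty(s\tau)=\int_s^1 f_\infty(r\tau)\tau\,dr$ and compare $f_\infty(r\tau)/(r\tau)^3$ with $f_\infty(\tau)/\tau^3$ using $(F_3)(II)$ with $t=r$. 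This yields the lower bound
\[
-a\theta_0\lambda_1\tau^2\int_s^1 r|1-r^2|\operatorname{sign}(1-r)\,dr=-\tfrac{a\theta_0\lambda_1}{4}(1-s^2)^2\tau^2 .
\]
The case $s=0$ follows by continuity. Summing over $x\in\Omega$ and using $\lambda_1\int_\Omega|u^\pm|^2\,d\mu\le\|u^\pm\|^2$, the nonlinear remainder is at least $-\frac{a\theta_0}{4}\big[(1-s^2)^2\|u^+\|^2+(1-t^2)^2\|u^-\|^2\big]$.

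\emph{$a$-part.} Here the diagonal terms give $\frac a2(1-s^2)\|u^+\|^2-\frac a4(1-s^4)\|u^+\|^2=\frac a4(1-s^2)^2\|u^+\|^2$, and similarly for $u^-$. Combined with the nonlinear bound, these produce the $\frac{a(1-\theta_0)}4$ terms. The $\mathcal{W}_u$ terms give $\frac a8(2-4st+s^4+t^4)\mathcal{W}_u$. Since
\[
2-4st+s^4+t^4-2(s-t)^2=(1-s^2)^2+(1-t^2)^2\ge0,
\]
this is at least $\frac a4(s-t)^2\mathcal{W}_u$, where the surplus is simply discarded.

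\emph{$b$-part (main obstacle).} I expect the main obstacle to be the quartic $b$-part. One has to expand
\[
\tfrac b4\|u\|^4-\tfrac b4\|su^++tu^-\|^4-\tfrac b4\|u\|^2\Big[(1-s^4)\big(\|u^+\|^2+\tfrac12\mathcal{W}_u\big)+(1-t^4)\big(\|u^-\|^2+\tfrac12\mathcal{W}_u\big)\Big]
\]
in the monomials $\|u^+\|^4$, $\|u^-\|^4$, $\|u^+\|^2\|u^-\|^2$, $\|u^\pm\|^2\mathcal{W}_u$ and $\mathcal{W}_u^2$. The pure terms $\|u^\pm\|^4$ cancel exactly. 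The cross term $\|u^+\|^2\|u^-\|^2$ should give $\frac b4(s^2-t^2)^2$, as in the continuous case. For the $\mathcal{W}_u$ monomials, I would verify coefficient by coefficient that each equals the stated coefficient, e.g.\ $\frac b8(s-t)^2(3s^2+2st+t^2)$, plus a nonnegative polynomial remainder (a sum of squares like the one above), which can then be dropped. The bookkeeping must use the same normalization of $\mathcal{W}_u$ as in the norm decomposition.
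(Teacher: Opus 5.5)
Your plan follows the paper's proof. It uses the same split into $a$-, $b$- and nonlinear parts and the same decomposition $\|su^++tu^-\|^2=s^2\|u^+\|^2+t^2\|u^-\|^2+st\,\mathcal{W}_u$. The normalization there is exact: $2\int_{\Omega\cup\partial\Omega}\Gamma(u^+,u^-)\,d\mu=\mathcal{W}_u$. It also uses the same integral representation of the nonlinearity combined with $(F_3)(II)$ and Poincar\'e, and the same inequality $s^4+t^4+2-4st\ge 2(s-t)^2$ in the $a$-part. In the $b$-part there is in fact no remainder to discard. The coefficients of $\|u^+\|^2\|u^-\|^2$, $\|u^+\|^2\mathcal{W}_u$, $\|u^-\|^2\mathcal{W}_u$ and $\mathcal{W}_u^2$ come out exactly as $\frac b4(s^2-t^2)^2$, $\frac b8(3s^4-4s^3t+t^4)=\frac b8(s-t)^2(3s^2+2st+t^2)$, its mirror image, and $\frac b8(s^2-t^2)^2$, so that step is an identity.

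The one thing you must fix is the sign in the nonlinear step. The nonlinear contribution to $I(u)-I(su^++tu^-)-\frac{1-s^4}{4}\langle I'(u),u^+\rangle-\frac{1-t^4}{4}\langle I'(u),u^-\rangle$ is $\int_\Omega\big[F(su^+)-F(u^+)+\frac{1-s^4}{4}f(u^+)u^+\big]\,d\mu$ plus the analogous term in $u^-,t$. This is the negative of the quantity you displayed, so what is needed is the upper bound
\[
F(\tau)-F(s\tau)-\frac{1-s^4}{4}f(\tau)\tau\le\frac{a\theta_0\lambda_1}{4}(1-s^2)^2\tau^2 ,
\]
which is the paper's (a1). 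This upper bound is also what $(F_3)(II)$ actually gives. Write $F(\tau)-F(s\tau)-\frac{1-s^4}{4}f(\tau)\tau=\int_s^1 r^3\tau^4\big[\frac{f_\infty(r\tau)}{(r\tau)^3}-\frac{f_\infty(\tau)}{\tau^3}\big]dr$. Condition $(F_3)(II)$ with $t=r$ bounds the bracket from above by $a\theta_0\lambda_1(1-r^2)/(r\tau)^2$ when $r<1$, and from below by $-a\theta_0\lambda_1(r^2-1)/(r\tau)^2$ when $r>1$. So for $s<1$ as well as for $s>1$ the oriented integral is at most $\frac{a\theta_0\lambda_1}{4}(1-s^2)^2\tau^2$.

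The lower bound you displayed would not help, and it is false in general. Take Example 1 of the paper, $f_\infty(t)=-ct^3/(1+t^2)$, at $s=0$. Your left-hand side equals $-\frac c4\tau^2+\frac c2\ln(1+\tau^2)-\frac c4+\frac{c}{4(1+\tau^2)}$, which drops below $-\frac{a\theta_0\lambda_1}{4}\tau^2$ for large $|\tau|$ as soon as $c>a\lambda_1$.

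Your concluding sentence is the correct statement: the nonlinear remainder is at least $-\frac{a\theta_0}{4}\big[(1-s^2)^2\|u^+\|^2+(1-t^2)^2\|u^-\|^2\big]$. Once the pointwise inequality is flipped to match it, the argument is complete and coincides with the paper's.
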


\begin{proof}[\bf{Proof}]
It follows from $(F_3)$ that
\begin{equation}\label{a2}
\left[
\frac{f(\tau)}{\tau^{3}}
-
\frac{f(t\tau)}{(t\tau)^{3}}
\right]\operatorname{sign}(1-t)
+
\frac{a\theta_{0}\lambda_{1}|1-t^{2}|}{(t\tau)^{2}}
\geq 0.
\end{equation}
Thus, \eqref{a2} implies that
\begin{align}\label{a1}
&\frac{1-t^{4}}{4}f(\tau)\tau+F(t\tau)-F(\tau)
+\frac{a\theta_{0}\lambda_{1}}{4}(1-t^{2})^{2}\tau^{2}
\notag\\
  =&\int_{t}^{1}\left[
\frac{f(\tau)}{\tau^{3}}
-\frac{f(s\tau)}{(s\tau)^{3}}
+\frac{a\theta_{0}\lambda_{1}(1-s^{2})}{(s\tau)^{2}}
\right]s^{3}\tau^{4}\,ds
\geq 0, \quad \forall\, t\geq 0,\ \tau\in\mathbb{R}\setminus\{0\}.
\end{align}
Then, by the Poincar\'e inequality
$\lambda_{1}\int_{\Omega}|u^{\pm}|^{2}d\mu \leq \|u^{\pm}\|^{2}$ and \eqref{a1}, we have
\begin{equation}\label{a5}
\int_{\Omega}F(u^{+})d\mu
-
\int_{\Omega}F(su^{+})d\mu
\leq
\frac{1-s^{4}}{4}\int_{\Omega}f(u^{+})u^{+}d\mu
+
\frac{a\theta_{0}}{4}(1-s^{2})^{2}\|u^{+}\|^{2}.
\end{equation}
and
\begin{equation}\label{a6}
\int_{\Omega}F(u^{-})d\mu
-
\int_{\Omega}F(tu^{-})d\mu
\leq
\frac{1-t^{4}}{4}\int_{\Omega}f(u^{-})u^{-}d\mu
+
\frac{a\theta_{0}}{4}(1-t^{2})^{2}\|u^{-}\|^{2}.
\end{equation}
Combining \eqref{a5} and \eqref{a6}, we obtain
\begin{align}\label{a7}
\int_{\Omega}F(u)\,d\mu
-
\int_{\Omega}F(su^{+}+tu^{-})\,d\mu
\notag
\leq&
\frac{1-s^{4}}{4}\int_{\Omega}f(u^{+})u^{+}\,d\mu
+
\frac{1-t^{4}}{4}\int_{\Omega}f(u^{-})u^{-}\,d\mu
\notag\\
&+
\frac{a\theta_{0}}{4}(1-s^{2})^{2}\|u^{+}\|^{2}
+
\frac{a\theta_{0}}{4}(1-t^{2})^{2}\|u^{-}\|^{2}.
\end{align}
Moreover, by Proposition 3.2 in \cite{Ou 2024} and (\ref{a7}), we have
\begin{align*}
& I(u)-I(su^{+}+tu^{-})
-\frac{1-s^{4}}{4}\langle I'(u),u^{+}\rangle
-\frac{1-t^{4}}{4}\langle I'(u),u^{-}\rangle
\\
={}&
\frac{a}{2}
(\|u^{+}\|^{2}+\|u^{-}\|^{2}+\mathcal W_u)
-\frac{a}{2}
(s^{2}\|u^{+}\|^{2}+t^{2}\|u^{-}\|^{2}+st\mathcal W_u)
+\frac{b}{4}
(\|u^{+}\|^{2}+\|u^{-}\|^{2}+\mathcal W_u)^{2}
\\
&-\frac{b}{4}
(s^{2}\|u^{+}\|^{2}+t^{2}\|u^{-}\|^{2}+st\mathcal W_u)^{2}
-\int_{\Omega}F(u^{+})
-\int_{\Omega}F(u^{-})
+\int_{\Omega}F(su^{+})
+\int_{\Omega}F(tu^{-})
\\
&-\frac{1-s^{4}}{4}
\left[
a\left(\|u^{+}\|^{2}+\frac12\mathcal W_u\right)
+b\left(\|u^{+}\|^{2}+\|u^{-}\|^{2}+\mathcal W_u\right)
\left(\|u^{+}\|^{2}+\frac12\mathcal W_u\right)
-\int_{\Omega}f(u^{+})u^{+}
\right]
\\
&-\frac{1-t^{4}}{4}
\left[
a\left(\|u^{-}\|^{2}+\frac12\mathcal W_u\right)
+b\left(\|u^{+}\|^{2}+\|u^{-}\|^{2}+\mathcal W_u\right)
\left(\|u^{-}\|^{2}+\frac12\mathcal W_u\right)
-\int_{\Omega}f(u^{-})u^{-}
\right].
\\
\geq{}&
\frac{a(1-s^{2})^{2}}{4}\|u^{+}\|^{2}
+\frac{a(1-t^{2})^{2}}{4}\|u^{-}\|^{2}
+\frac{a}{8}(s^{4}+t^{4}+2-4st)\mathcal W_u
+\frac{b(s^{2}-t^{2})^{2}}{4}
\|u^{+}\|^{2}\|u^{-}\|^{2}
\\
&+
\frac{b}{8}(s-t)^{2}
(3s^{2}+2st+t^{2})
\|u^{+}\|^{2}\mathcal W_u
+
\frac{b}{8}(s-t)^{2}
(3t^{2}+2st+s^{2})
\|u^{-}\|^{2}\mathcal W_u
\\
&+
\frac{b(s^{2}-t^{2})^{2}}{8}\mathcal W_u^{2}
-
\frac{a\theta_{0}}{4}(1-s^{2})^{2}\|u^{+}\|^{2}
-
\frac{a\theta_{0}}{4}(1-t^{2})^{2}\|u^{-}\|^{2}.
\\
={}&
\frac{a(1-\theta_{0})}{4}(1-s^{2})^{2}\|u^{+}\|^{2}
+\frac{a(1-\theta_{0})}{4}(1-t^{2})^{2}\|u^{-}\|^{2}
+\frac{a}{8}(s^{4}+t^{4}+2-4st)\mathcal W_u
+\frac{b(s^{2}-t^{2})^{2}}{4}\|u^{+}\|^{2}\|u^{-}\|^{2}
\\
&+
\frac{b}{8}(s-t)^{2}
(3s^{2}+2st+t^{2})\|u^{+}\|^{2}\mathcal W_u
+\frac{b}{8}(s-t)^{2}
(3t^{2}+2st+s^{2})\|u^{-}\|^{2}\mathcal W_u
+\frac{b(s^{2}-t^{2})^{2}}{8}\mathcal W_u^{2}.
\end{align*}
Furthermore, by $s^{4}+t^{4}+2-4st \geq 2(s-t)^{2}$, and the nonnegativity of $\mathcal{W}_u$, we obtain the conclusion.
\end{proof}
\par
\noindent{\bf Remark 3.1.}\label{rem3.1}
Let \(s=t\) in Lemma \ref{lem3.2}. For all
\(u\in \mathcal{H}_0^{1,2}(\Omega)\) and \(t\geq 0\), it holds that
\[
I(u)\geq I(tu)
+\frac{1-t^4}{4}\left\langle I'(u),u\right\rangle
+\frac{a(1-\theta_0)}{4}(1-t^2)^2
\left(\|u^+\|^2+\|u^-\|^2\right).
\]

\vskip2mm
\par
We can also obtain the following result by using the similar proof as that in Lemma \ref{lem3.2}.
\par

\begin{lemma}\label{lem3.3}
{\it Suppose that \((F_1)-(F_4)\) hold. For all
\(u\in \mathcal{H}_0^{1,2}(\Omega)\) and \(t\geq 0\), it holds that
\[
I(u)\geq I(tu)
+\frac{1-t^4}{4}\left\langle I'(u),u\right\rangle
+\frac{a(1-\theta_0)(1-t^2)^2}{4}\|u\|^2.
\]}
\end{lemma}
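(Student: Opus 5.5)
The plan is to repeat the argument of Lemma \ref{lem3.2} with a single scaling parameter and without splitting $u$ into $u^+$ and $u^-$. This avoids the cross term $\mathcal{W}_u$ entirely. The pointwise inequality \eqref{a1} holds for every $\tau\in\mathbb{R}\setminus\{0\}$ and every $t\ge 0$; at $\tau=0$ both sides vanish. We therefore apply it with $\tau=u(x)$ for each $x\in\Omega$, multiply by $\mu(x)$ and sum over $\Omega$. This gives
\[
\int_\Omega F(u)\,d\mu-\int_\Omega F(tu)\,d\mu\le \frac{1-t^4}{4}\int_\Omega f(u)u\,d\mu+\frac{a\theta_0\lambda_1}{4}(1-t^2)^2\int_\Omega u^2\,d\mu .
\]
The Poincar\'e inequality $\lambda_1\int_\Omega u^2\,d\mu\le\|u\|^2$, which holds by the definition of $\lambda_1$ as the first eigenvalue on the finite-dimensional space $\mathcal{H}_0^{1,2}(\Omega)$, then bounds the last term by $\frac{a\theta_0}{4}(1-t^2)^2\|u\|^2$.

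Next we compute the quadratic and quartic parts exactly. Writing $A=\|u\|^2$, we have $I(tu)=\frac{a}{2}t^2A+\frac{b}{4}t^4A^2-\int_\Omega F(tu)\,d\mu$ and $\langle I'(u),u\rangle=aA+bA^2-\int_\Omega f(u)u\,d\mu$. Substituting these,
\[
I(u)-I(tu)-\frac{1-t^4}{4}\langle I'(u),u\rangle
=\frac{a}{2}(1-t^2)A-\frac{a}{4}(1-t^4)A+\frac{b}{4}(1-t^4)A^2-\frac{b}{4}(1-t^4)A^2+R,
\]
where $R=\int_\Omega F(tu)\,d\mu-\int_\Omega F(u)\,d\mu+\frac{1-t^4}{4}\int_\Omega f(u)u\,d\mu$. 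The $b$-terms cancel exactly. For the $a$-terms, $\frac{a}{4}A\bigl(2-2t^2-1+t^4\bigr)=\frac{a}{4}(1-t^2)^2A$.

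By the first step, $R\ge-\frac{a\theta_0}{4}(1-t^2)^2A$. Adding the two estimates gives
\[
I(u)-I(tu)-\frac{1-t^4}{4}\langle I'(u),u\rangle\ge\frac{a(1-\theta_0)}{4}(1-t^2)^2\|u\|^2,
\]
which is the claim.

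The only point needing care is justifying \eqref{a1} itself. It follows from the identity $\frac{d}{ds}\bigl[F(s\tau)-\frac{s^4}{4}f(\tau)\tau-\frac{a\theta_0\lambda_1}{4}(1-s^2)^2\tau^2\bigr]$ together with $(F_3)(II)$. Since the $lt^3$ part of $f$ cancels in $\frac{f(\tau)}{\tau^3}-\frac{f(s\tau)}{(s\tau)^3}$, condition \eqref{a2} for $f$ follows from $(F_3)(II)$ for $f_\infty$. The sign of the integrand is correct in both cases $t<1$ and $t>1$ because of the $\operatorname{sign}(1-t)$ factor. This was already established in the proof of Lemma \ref{lem3.2}, so here it is simply invoked.

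No graph-specific difficulty arises, because no positive/negative splitting is used. Hence Proposition 3.2 of \cite{Ou 2024} is not needed.
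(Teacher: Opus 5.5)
Your proof is correct and follows the paper's argument essentially line by line: the exact computation shows that the $b$-terms cancel and the $a$-terms leave $\frac{a}{4}(1-t^2)^2\|u\|^2$, and then summing \eqref{a1} over $\Omega$ with $\tau=u(x)$ and applying the Poincar\'e inequality gives the claim. One small sign slip in your side remark: the auxiliary function should be $F(s\tau)-\frac{s^4}{4}f(\tau)\tau+\frac{a\theta_0\lambda_1}{4}(1-s^2)^2\tau^2$ (plus sign on the last term), since only then does its value at $s=t$ minus its value at $s=1$ equal the left-hand side of \eqref{a1}; you invoke \eqref{a1} from the proof of Lemma \ref{lem3.2} anyway, so this does not affect the argument.
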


\begin{proof}[\bf{Proof}]
By (\ref{b3}), (\ref{b4}) and (\ref{a1}), we have
\begin{align*}
I(u)-I(tu)
={}&
\frac{a}{2}(1-t^2)\|u\|^2
+\frac{b}{4}(1-t^4)\|u\|^4
-\int_{\Omega}\left[F(u)-F(tu)\right]\,d\mu  \\
={}&
\frac{a(1-t^4)}{4}\|u\|^2
+\frac{b(1-t^4)}{4}\|u\|^4
-\frac{1-t^4}{4}\int_{\Omega}f(u)u\,d\mu
+\frac{a(1-t^2)^2}{4}\|u\|^2  \\
&+
\frac{1-t^4}{4}\int_{\Omega}f(u)u\,d\mu
-\int_{\Omega}\left[F(u)-F(tu)\right]\,d\mu  \\
={}&
\frac{1-t^4}{4}\left\langle I'(u),u\right\rangle
+\frac{a(1-t^2)^2}{4}\|u\|^2
+\int_{\Omega}
\left[
\frac{1-t^4}{4}f(u)u
-F(u)+F(tu)
\right]\,d\mu  \\
\geq{}&
\frac{1-t^4}{4}\left\langle I'(u),u\right\rangle
+\frac{a(1-t^2)^2}{4}\|u\|^2
-\frac{a\theta_0\lambda_1(1-t^2)^2}{4}
\int_{\Omega}|u|^2\,d\mu  \\
\geq{}&
\frac{1-t^4}{4}\left\langle I'(u),u\right\rangle
+\frac{a(1-t^2)^2}{4}\|u\|^2
-\frac{a\theta_0(1-t^2)^2}{4}\|u\|^2  \\
={}&
\frac{1-t^4}{4}\left\langle I'(u),u\right\rangle
+\frac{a(1-\theta_0)(1-t^2)^2}{4}\|u\|^2 .
\end{align*}
Thus, the proof is complete.
\end{proof}
From the definitions of \(\mathcal{M}\) and \(\mathcal{N}\), together with Lemma \ref{lem3.2} and \ref{lem3.3}, we directly obtain the following two corollaries.
\begin{corollary}\label{cor3.4}
{\it Suppose that \((F_1)-(F_4)\) hold. If \(u=u^{+}+u^{-}\in \mathcal{M}\), then}
\begin{align*}
I(u) \geq\;& I(su^{+}+tu^{-})+\frac{a(1-\theta_{0})}{4}(1-s^{2})^{2}\|u^{+}\|^{2}
+\frac{a(1-\theta_{0})}{4}(1-t^{2})^{2}\|u^{-}\|^{2}  \\
&+\frac{a(s-t)^{2}}{4}
\mathcal{W}_u  +\frac{b(s^{2}-t^{2})^{2}}{4}\|u^{+}\|^{2}\|u^{-}\|^{2}  \\
&+\frac{b}{8}(s-t)^{2}(3s^{2}+2st+t^{2})\|u^{+}\|^{2}
\mathcal{W}_u
+\frac{b}{8}(s-t)^{2}(3t^{2}+2st+s^{2})\|u^{-}\|^{2}
\mathcal{W}_u
+\frac{b(s^{2}-t^{2})^{2}}{8}
\mathcal{W}_u^{2}, \quad \forall\,s,t\geq 0 .
\end{align*}
\end{corollary}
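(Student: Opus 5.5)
This is a direct specialization of Lemma \ref{lem3.2}. I will fix $u=u^{+}+u^{-}\in\mathcal{M}$ and arbitrary $s,t\geq 0$, and apply Lemma \ref{lem3.2} to this $u$ with the same $s,t$. That lemma requires only $u\in\mathcal{H}_0^{1,2}(\Omega)$ and $(F_1)$–$(F_4)$, which hold here. It gives $I(u)$ bounded below by $I(su^{+}+tu^{-})$, plus the two terms $\frac{1-s^{4}}{4}\langle I'(u),u^{+}\rangle$ and $\frac{1-t^{4}}{4}\langle I'(u),u^{-}\rangle$, plus exactly the remaining $a$- and $b$-terms appearing in the statement of Corollary \ref{cor3.4}.

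Next I use the definition of $\mathcal{M}$. Membership in $\mathcal{M}$ means $u^{\pm}\neq 0$ together with $\langle I'(u),u^{+}\rangle=0$ and $\langle I'(u),u^{-}\rangle=0$. Hence the two terms carrying the factors $\frac{1-s^{4}}{4}$ and $\frac{1-t^{4}}{4}$ vanish identically, whatever the values of $s$ and $t$. After deleting them, the inequality of Lemma \ref{lem3.2} is word for word the claimed inequality. Since $s,t\geq0$ were arbitrary, the estimate holds for all $s,t\geq 0$.

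There is essentially no obstacle. The only point to check is that all the coefficient expressions and the quantity $\mathcal{W}_u$ are the same objects as in Lemma \ref{lem3.2}, so that nothing is reinterpreted. In particular, the splitting $u=u^{+}+u^{-}$ and the definition of $\mathcal{W}_u$ depend only on $u$, not on $s$ or $t$. Nonnegativity of $\mathcal{W}_u$ is not even needed here, because no further estimate is performed beyond dropping two terms that are exactly zero.
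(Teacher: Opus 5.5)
Your proposal is correct and matches the paper's argument: the paper also obtains Corollary~\ref{cor3.4} directly from Lemma~\ref{lem3.2}, using the fact that $\langle I'(u),u^{\pm}\rangle=0$ for $u\in\mathcal{M}$. Those two terms vanish, and what remains is exactly the stated inequality. You are also right that nonnegativity of $\mathcal{W}_u$ is not needed at this step, since it was already used inside the proof of Lemma~\ref{lem3.2}.
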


\begin{corollary}\label{cor3.5}
{\it Suppose that \((F_1)-(F_4)\) hold. If \(u\in\mathcal N\), then}
\[
I(u)\geq I(tu)
+\frac{a(1-\theta_0)}{4}(1-t^2)^2\|u\|^2,
\quad \forall t\geq0 .
\]
\end{corollary}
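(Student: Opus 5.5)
This corollary follows by direct specialization of Remark 3.1 and Lemma \ref{lem3.3} to the Nehari set $\mathcal N$. I would rely on Lemma \ref{lem3.3} rather than Remark 3.1. Lemma \ref{lem3.3} holds for every $u\in\mathcal H_0^{1,2}(\Omega)$ and every $t\geq 0$, and its remainder term is already written with the full norm $\|u\|^2$, which is exactly what the corollary requires.

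The argument has three steps. First, fix $u\in\mathcal N$ and $t\ge 0$. Second, apply Lemma \ref{lem3.3} to this $u$ and $t$, which gives
\[
I(u)\geq I(tu)+\frac{1-t^4}{4}\langle I'(u),u\rangle+\frac{a(1-\theta_0)(1-t^2)^2}{4}\|u\|^2 .
\]
Third, observe that $u\in\mathcal N$ means $u\neq0$ and $\langle I'(u),u\rangle=0$. The middle term therefore vanishes for every $t$, and the stated inequality follows immediately.

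No step here is a genuine obstacle, since all of the analytic work is contained in Lemma \ref{lem3.3}. Its proof uses inequality (\ref{a1}), derived from $(F_3)(II)$, together with the Poincar\'e inequality $\lambda_1\int_\Omega|u|^2d\mu\le\|u\|^2$. The one point worth checking is that Lemma \ref{lem3.3} is stated for all $u$ and all $t\ge0$, including $t=0$ and $t>1$. This is the case, because (\ref{a1}) is valid for all $t\ge0$ through the $\operatorname{sign}(1-t)$ formulation of $(F_3)(II)$, and the point $\tau=0$ contributes nothing to the integrals.

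Remark 3.1 would not give the corollary directly. Its remainder involves $\|u^+\|^2+\|u^-\|^2$, which on a graph differs from $\|u\|^2$ by the nonnegative cross term $\mathcal W_u$ from Proposition 3.2 of \cite{Ou 2024}, so it is the weaker bound. This is why I would invoke Lemma \ref{lem3.3} and not Remark 3.1.
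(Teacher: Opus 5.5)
Your proposal is correct and follows the paper's argument: the paper obtains Corollary \ref{cor3.5} directly from Lemma \ref{lem3.3} by using $\langle I'(u),u\rangle=0$ for $u\in\mathcal N$, exactly as you do. Your side remark is also accurate: Remark 3.1 would give only the weaker remainder involving $\|u^+\|^2+\|u^-\|^2=\|u\|^2-\mathcal W_u$.
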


\vskip2mm
\par
Furthermore, by the nonnegativity of $\mathcal{W}_u$, Corollary \ref{cor3.4} and Corollary \ref{cor3.5}, it is easy to obtain the following conclusions.
\begin{corollary}\label{cor3.6}
{\it Suppose that \((F_1)-(F_4)\) hold. If \(u=u^{+}+u^{-}\in \mathcal{M}\), then
$I(u)=\max\limits_{s,t\geq 0} I(su^+ + tu^-)$.}
\end{corollary}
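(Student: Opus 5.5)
The claim follows directly from Corollary \ref{cor3.4}, since every correction term there is nonnegative. Fix $u=u^{+}+u^{-}\in\mathcal{M}$ and arbitrary $s,t\ge 0$.

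First, every extra term on the right-hand side of Corollary \ref{cor3.4} is nonnegative:
\begin{itemize}
\item $a(1-\theta_0)(1-s^2)^2\|u^+\|^2/4\ge 0$ and $a(1-\theta_0)(1-t^2)^2\|u^-\|^2/4\ge 0$, because $a>0$ and $\theta_0\in(0,1)$;
\item $a(s-t)^2\mathcal{W}_u/4\ge 0$, $b(s^2-t^2)^2\|u^+\|^2\|u^-\|^2/4\ge 0$ and $b(s^2-t^2)^2\mathcal{W}_u^2/8\ge 0$, because $b\ge 0$ and $\mathcal{W}_u\ge 0$;
\item the two mixed terms contain the factors $3s^2+2st+t^2$ and $3t^2+2st+s^2$, which are nonnegative for $s,t\ge 0$, multiplied by $(s-t)^2$, $b\ge0$, $\|u^\pm\|^2$ and $\mathcal{W}_u\ge0$, so they are nonnegative as well.
\end{itemize}
Discarding all of them gives
\[
I(u)\ge I(su^++tu^-)\qquad\text{for all } s,t\ge 0 .
\]

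Second, the choice $s=t=1$ gives $su^++tu^-=u^++u^-=u$. Hence the supremum of $I(su^++tu^-)$ over $s,t\ge 0$ is attained at $(1,1)$ and equals $I(u)$, which is exactly $I(u)=\max_{s,t\ge0}I(su^++tu^-)$.

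There is essentially no obstacle, because the difficult inequality is already contained in Lemma \ref{lem3.2} via Corollary \ref{cor3.4}. The only point requiring care is that $\mathcal{W}_u\ge 0$. By definition $\mathcal{W}_u$ is a sum of terms $-w_{xy}u^-(x)u^+(y)$ with $w_{xy}>0$, $u^-\le 0$ and $u^+\ge 0$, so each term is nonnegative. The paper asserts strict positivity; that is not needed here, and nonnegativity (as used in Lemma \ref{lem3.2}) suffices. One could also add that the maximum is strict away from $(1,1)$: for $(s,t)\neq(1,1)$, at least one of $(1-s^2)^2\|u^+\|^2$ and $(1-t^2)^2\|u^-\|^2$ is positive, since $u^\pm\neq0$. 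This strictness is not required for the stated corollary.
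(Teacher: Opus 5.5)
Your proposal is correct and takes the same approach as the paper: the paper derives this corollary directly from Corollary \ref{cor3.4} by discarding the nonnegative correction terms (using $\mathcal{W}_u\ge 0$), and then notes that $(s,t)=(1,1)$ recovers $u$. Your remark that only $\mathcal{W}_u\ge 0$ is needed is also accurate; the paper's claim that $\mathcal{W}_u>0$ for every $u$ can fail, for example when no edge joins $\Omega^+$ and $\Omega^-$.
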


\begin{corollary}\label{cor3.7}
{\it Suppose that \((F_1)-(F_4)\) hold. If \(u\in\mathcal N\), then $I(u)=\max\limits_{t\geq0} I(tu).$}

\end{corollary}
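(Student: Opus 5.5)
The statement to prove is Corollary \ref{cor3.7}: if $u\in\mathcal N$, then $I(u)=\max_{t\geq0}I(tu)$. The plan is to read this off directly from Corollary \ref{cor3.5}.

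First, for $u\in\mathcal N$ we have $\langle I'(u),u\rangle=0$, so Corollary \ref{cor3.5} gives, for every $t\geq 0$,
\[
I(u)\geq I(tu)+\frac{a(1-\theta_0)}{4}(1-t^2)^2\|u\|^2 .
\]
Since $a>0$ and $\theta_0\in(0,1)$, the last term is nonnegative, and therefore $I(u)\geq I(tu)$ for all $t\geq0$. Taking $t=1$ shows that the supremum $\sup_{t\geq0}I(tu)$ is attained at $t=1$ and equals $I(u)$. This is exactly the claim, with the maximum attained.

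As a supplementary remark, the maximizer is strict. Because $u\neq0$ and $\|\cdot\|$ is a norm on the finite-dimensional space $\mathcal H_0^{1,2}(\Omega)$, we have $\|u\|>0$. Hence the remainder term is strictly positive for $t\neq1$, giving $I(tu)<I(u)$ whenever $t\neq 1$. So $t=1$ is the unique maximum point on the ray. This is not needed for the corollary but will be useful later.

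There is essentially no obstacle here. The only point to check is that the coefficient $a(1-\theta_0)/4$ is nonnegative, which follows from $a>0$ and $\theta_0<1$ in $(F_3)$. The substantive work, namely the inequality of Lemma \ref{lem3.3} built from \eqref{a1} and the Poincar\'e inequality, has already been done.
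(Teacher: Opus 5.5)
Your proposal is correct and follows the same route as the paper, which obtains this corollary directly from Corollary~\ref{cor3.5}: drop the nonnegative remainder $\frac{a(1-\theta_0)}{4}(1-t^2)^2\|u\|^2$ to get $I(u)\geq I(tu)$ for all $t\geq 0$, and note that equality holds at $t=1$. Your extra remark that the maximum at $t=1$ is strict, because $\|u\|>0$, is also valid, and it is exactly the fact the paper later uses for the uniqueness part of Lemma~\ref{lem3.16}.
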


\begin{lemma}\label{lem3.8}
{\it Suppose that \((F_3)\) holds. Then}
\[
f_{\infty}(\tau)\tau \leq a\theta_0\lambda_1 \tau^2,
\quad \forall\,\tau\in\mathbb{R}.
\]
\end{lemma}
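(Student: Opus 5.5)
The plan is to read off the inequality directly from $(F_3)(II)$ by fixing $\tau\neq0$, restricting to $t>1$, and letting $t\to+\infty$, where $(F_3)(I)$ kills the term involving $f_\infty(t\tau)$.

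First, fix $\tau\in\mathbb{R}\setminus\{0\}$ and take $t>1$. Then $\operatorname{sign}(1-t)=-1$ and $|1-t^2|=t^2-1$, so $(F_3)(II)$ reads
\[
\frac{f_\infty(t\tau)}{(t\tau)^3}-\frac{f_\infty(\tau)}{\tau^3}+\frac{a\theta_0\lambda_1(t^2-1)}{t^2\tau^2}\ge 0 .
\]
Rearranging gives, for every $t>1$,
\[
\frac{f_\infty(\tau)}{\tau^3}\le \frac{f_\infty(t\tau)}{(t\tau)^3}+\frac{a\theta_0\lambda_1}{\tau^2}\Bigl(1-\frac{1}{t^2}\Bigr).
\]

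Next, let $t\to+\infty$. Since $|t\tau|\to\infty$, condition $(F_3)(I)$ gives $f_\infty(t\tau)/(t\tau)^3\to0$, while $1-1/t^2\to1$. Hence
\[
\frac{f_\infty(\tau)}{\tau^3}\le\frac{a\theta_0\lambda_1}{\tau^2}.
\]
Multiplying by $\tau^4>0$ yields $f_\infty(\tau)\tau\le a\theta_0\lambda_1\tau^2$ for all $\tau\neq0$.

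Finally, for $\tau=0$ both sides vanish, since the left-hand side is $f_\infty(0)\cdot 0=0$ and the right-hand side is $0$. Thus the inequality holds for all $\tau\in\mathbb{R}$.

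There is no real obstacle here. The only points needing care are choosing the correct sign regime $t>1$, so that the $\theta_0$-term appears with the favorable sign, and noting that the limit uses only $(F_3)(I)$. Nothing from $(F_1)$, $(F_2)$ or $(F_4)$ is needed, consistent with the lemma assuming only $(F_3)$.
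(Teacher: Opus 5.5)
Your proof is correct and is the paper's argument: specialize $(F_3)(II)$, let $t\to+\infty$ using $(F_3)(I)$, and multiply through by $\tau^4$. You are slightly more careful than the paper in two places. You restrict explicitly to $t>1$ so that the sign is fixed, and you treat $\tau=0$ separately, whereas the paper writes the limiting inequality ``for all $\tau\in\mathbb{R}$'' even though it divides by $\tau$.
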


\begin{proof}[\bf{Proof}]
For all \(t>0\) and \(\tau\in\mathbb{R}\setminus\{0\}\), it follows from \((II)\) in \((F_3)\) that
\[
\left[
\frac{f_{\infty}(\tau)}{\tau^3}
-
\frac{f_{\infty}(t\tau)}{(t\tau)^3}
\right]\operatorname{sign}(1-t)
+
\frac{a\theta_0\lambda_1|1-t^2|}{(t\tau)^2}
\geq 0 .
\]
Taking the limit as $t \rightarrow +\infty$ in the above inequality, we deduce from $(I)$ in $(F_3)$ that

\[-
\frac{f_{\infty}(\tau)}{\tau^3}
+
\frac{a\theta_0\lambda_1}{\tau^2}
\geq 0,\quad \forall\,\tau\in\mathbb{R}.
\]
Hence,
\[
f_{\infty}(\tau)\tau \leq a\theta_0\lambda_1 \tau^2,
\quad \forall\,\tau\in\mathbb{R}.
\]
The proof is complete.
\end{proof}

 Together with the Poincar\'e inequality and Lemma \ref{lem3.8}, we can obtain that
\begin{equation}\label{c3}
\int_{\Omega}f_{\infty}(u^\pm)u^\pm\,d\mu
\leq
a\theta_0\|u^\pm\|^2.
\end{equation}

Define a set \(E\) as follows:
\begin{equation}\label{c1}
E=\left\{
u\in \mathcal{H}_0^{1,2}(\Omega):\
u^\pm\neq 0,\
b\|u\|^2\!\int_{\Omega\cup\partial\Omega}\!\Gamma(u,u^\pm)\,d\mu
<l\!\int_{\Omega}\!|u^\pm|^4\,d\mu
\right\}.
\end{equation}
We claim that  \(E\) is not empty under the assumptions \((F_1)-(F_4)\). Indeed, since \(l>b\mu_0\), by the definition of \(\mu_0\), there exists
\(u\in H_0^{1,2}(\Omega)\) with \(u^\pm\neq0\) such that
\[
b\|u\|^2\int_{\Omega}\Gamma(u,u^\pm)d\mu
<l\int_{\Omega}|u^\pm|^4d\mu .
\]
Combining \(u=u^++u^-\) with the bilinearity of \(\Gamma\), we obtain
\[
b\|u\|^4
<l\int_{\Omega}(|u^+|^4+|u^-|^4)d\mu
=l\int_{\Omega}|u|^4d\mu .
\]
Thus, \(u\in E\), which implies that \(E\neq\emptyset\).
\begin{lemma}\label{lem3.9}
{\it Suppose that \((F_1)-(F_4)\) hold. If \(u\in \mathcal{H}_0^{1,2}(\Omega)\) with \(u^\pm\neq 0\) and
$\langle I'(u),u^\pm\rangle\leq 0$,
then \(u\in E\).}
\end{lemma}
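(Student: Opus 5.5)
We unpack the hypothesis $\langle I'(u),u^\pm\rangle\le 0$ using (\ref{b4}) with $v=u^\pm$. We then split $f(u^\pm)u^\pm$ into its cubic part and its $f_\infty$ part and absorb the $f_\infty$ part with (\ref{c3}). What remains is a strict inequality between the Kirchhoff term and the quartic term, which is exactly the defining inequality of $E$ in (\ref{c1}).

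\textbf{Step 1: expand the hypothesis.} Since $u^+$ and $u^-$ have disjoint supports in $\Omega$, we have $f(u)u^\pm=f(u^\pm)u^\pm$ pointwise on $\Omega$. Hence
\[
0\ge\langle I'(u),u^\pm\rangle
=a\int_{\Omega\cup\partial\Omega}\Gamma(u,u^\pm)\,d\mu
+b\|u\|^2\int_{\Omega\cup\partial\Omega}\Gamma(u,u^\pm)\,d\mu
-l\int_\Omega|u^\pm|^4\,d\mu
-\int_\Omega f_\infty(u^\pm)u^\pm\,d\mu ,
\]
where we used $(F_3)$ to write $f(u^\pm)u^\pm=l|u^\pm|^4+f_\infty(u^\pm)u^\pm$.

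\textbf{Step 2: bound $\int_{\Omega\cup\partial\Omega}\Gamma(u,u^\pm)\,d\mu$ from below by $\|u^\pm\|^2$.} By Proposition 3.2 in \cite{Ou 2024}, which is the decomposition already used in the proof of Lemma \ref{lem3.2}, this quantity equals $\|u^\pm\|^2+\frac12\mathcal W_u$. It suffices that $\mathcal W_u\ge0$. This holds because each summand $-w_{xy}u^-(\cdot)u^+(\cdot)$ is nonnegative: there $u^-\le0$ and $u^+\ge0$.

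\textbf{Step 3: absorb the $f_\infty$ term.} By (\ref{c3}),
\[
\int_\Omega f_\infty(u^\pm)u^\pm\,d\mu\le a\theta_0\|u^\pm\|^2 .
\]
Combining this with Steps 1 and 2 gives
\[
b\|u\|^2\int_{\Omega\cup\partial\Omega}\Gamma(u,u^\pm)\,d\mu-l\int_\Omega|u^\pm|^4\,d\mu\le -a(1-\theta_0)\|u^\pm\|^2-\frac a2\mathcal W_u .
\]
Since $u^\pm\ne0$ and $u^\pm$ vanishes on $\partial\Omega$, we have $\|u^\pm\|>0$. As $a>0$ and $\theta_0<1$, the right-hand side is strictly negative. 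This is precisely the inequality required in (\ref{c1}), so $u\in E$.

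\textbf{Main obstacle.} The step needing care is Step 2, namely the graph decomposition of $\int\Gamma(u,u^\pm)\,d\mu$ and the sign of the cross term $\mathcal W_u$. This is where the graph setting differs from the continuum, in which $\int\nabla u\cdot\nabla u^\pm=\|u^\pm\|^2$ exactly. Only the nonnegativity of $\mathcal W_u$ is needed, and that follows directly from its definition. The other point to check is that (\ref{c3}) really uses only Lemma \ref{lem3.8} and the Poincaré inequality applied to $u^\pm\in\mathcal H_0^{1,2}(\Omega)$; this holds because $u^\pm$ vanishes on $\partial\Omega$.
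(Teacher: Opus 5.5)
Your proof is correct and follows the paper's own argument: you expand $\langle I'(u),u^\pm\rangle\le 0$, split $f(u^\pm)u^\pm=l|u^\pm|^4+f_\infty(u^\pm)u^\pm$ via $(F_3)$, bound $\int_{\Omega\cup\partial\Omega}\Gamma(u,u^\pm)\,d\mu\ge\|u^\pm\|^2$, and absorb the $f_\infty$ term with (\ref{c3}) to get $-a(1-\theta_0)\|u^\pm\|^2<0$. The only difference is cosmetic: you obtain the lower bound from the decomposition $\|u^\pm\|^2+\frac12\mathcal W_u$ with $\mathcal W_u\ge 0$, whereas the paper cites inequalities (20)--(21) of \cite{Ou 2024}; note that $\mathcal W_u\ge0$ is what actually holds in general (the paper's claim that $\mathcal W_u>0$ always is too strong), and it is all that is needed.
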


\begin{proof}[\bf{Proof}]
For any \(u\in \mathcal{H}_0^{1,2}(\Omega)\) satisfying \(u^\pm\neq 0\) and
$\langle I'(u),u^\pm\rangle\leq 0$, we have
\[
a\int_{\Omega\cup\partial\Omega}\Gamma(u,u^\pm)\,d\mu
+
b\|u\|^2
\int_{\Omega\cup\partial\Omega}\Gamma(u,u^\pm)\,d\mu
\leq
\int_{\Omega}f(u)u^\pm\,d\mu .
\]
Then it follows from  $(F_3)$ that
\[
\int_{\Omega}f(u^\pm)u^\pm\,d\mu
=
l\int_{\Omega}|u^\pm|^4\,d\mu
+
\int_{\Omega}f_\infty(u^\pm)u^\pm\,d\mu .
\]
and
\begin{equation}\label{c4}
b\|u\|^2
\int_{\Omega\cup\partial\Omega}\Gamma(u,u^\pm)\,d\mu
-
l\int_{\Omega}|u^\pm|^4\,d\mu
\leq
\int_{\Omega}f_\infty(u^\pm)u^\pm\,d\mu
-
a\int_{\Omega\cup\partial\Omega}\Gamma(u,u^\pm)\,d\mu.
\end{equation}
Moreover, by inequalities (20) and (21) in \cite{Ou 2024}, we have
\begin{equation}\label{c5}
\int_{\Omega\cup\partial\Omega}\Gamma(u,u^\pm)\,d\mu
\geq
\|u^\pm\|^2.
\end{equation}
Hence, it follows from (\ref{c3}), (\ref{c4}) and (\ref{c5}) that
\[
\begin{aligned}
& b\|u\|^2
\int_{\Omega\cup\partial\Omega}\Gamma(u,u^\pm)\,d\mu
-
l\int_{\Omega}|u^\pm|^4\,d\mu  \\
&\leq
a\theta_0\|u^\pm\|^2
-
a\int_{\Omega\cup\partial\Omega}\Gamma(u,u^\pm)\,d\mu  \\
&\leq
a(\theta_0-1)\|u^\pm\|^2,
\end{aligned}
\]
which implies that \(u\in E\) due to \(0<\theta_0<1\). The proof is complete.
\end{proof}
By the definition of \(\mathcal{M}\) and Lemma \ref{lem3.9}, we directly conclude the following corollary.

\begin{corollary}\label{cor3.10}
{\it Suppose that \((F_1)-(F_4)\) hold. Then}
$
\mathcal{M}\subset E .
$
\end{corollary}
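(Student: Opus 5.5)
The statement is Corollary \ref{cor3.10}, $\mathcal{M}\subset E$. It should follow at once from Lemma \ref{lem3.9}, so the plan is simply to check that its hypotheses hold for every element of $\mathcal{M}$.

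Fix an arbitrary $u\in\mathcal{M}$. By the definition of $\mathcal{M}$ in Theorem 1.2, $u\in\mathcal{H}_0^{1,2}(\Omega)$ with $u^+\neq0$ and $u^-\neq0$. Moreover $\langle I'(u),u^+\rangle=0$ and $\langle I'(u),u^-\rangle=0$. In particular both quantities satisfy $\langle I'(u),u^\pm\rangle\le 0$.

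These are exactly the hypotheses of Lemma \ref{lem3.9}, which holds under $(F_1)$--$(F_4)$. That lemma gives $u\in E$, meaning
\[
b\|u\|^2\int_{\Omega\cup\partial\Omega}\Gamma(u,u^\pm)\,d\mu<l\int_\Omega|u^\pm|^4\,d\mu .
\]
Since $u\in\mathcal{M}$ was arbitrary, $\mathcal{M}\subset E$.

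There is no real obstacle here, since all the analytic work was done in Lemma \ref{lem3.9}. The only point to check is that the strict inequality in $E$ survives even though the Nehari-type conditions are equalities. It does: the chain in Lemma \ref{lem3.9} ends with the bound $a(\theta_0-1)\|u^\pm\|^2$, which is strictly negative because $\theta_0<1$, $a>0$ and $u^\pm\neq0$.
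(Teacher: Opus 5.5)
Your proposal is correct and follows the paper's argument exactly. Every $u\in\mathcal{M}$ has $u^\pm\neq0$ and $\langle I'(u),u^\pm\rangle=0\le0$, so Lemma~\ref{lem3.9} gives $u\in E$ at once. Your remark about strictness is also right, but it is already contained in the conclusion of Lemma~\ref{lem3.9}, through the bound $a(\theta_0-1)\|u^\pm\|^2<0$.
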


\begin{lemma}\label{lem3.11}
{\it Suppose that \((F_1)-(F_4)\) hold. For any \(u\in E\), there exists a unique pair of positive numbers $(s_u,t_u)$ such that
$
s_u u^+ + t_u u^- \in \mathcal{M}.
$}
\end{lemma}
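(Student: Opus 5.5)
Uniqueness will come from Corollary \ref{cor3.4}, and existence from maximizing the fibering map over the quarter plane. Fix $u\in E$ and set
\[
\varphi(s,t):=I(su^++tu^-),\qquad s,t\ge 0 .
\]
We have $(su^++tu^-)^+=su^+$ and $(su^++tu^-)^-=tu^-$, and the cross term scales as $\mathcal W_{su^++tu^-}=st\mathcal W_u$. Using Proposition 3.2 of \cite{Ou 2024}, $\varphi$ is an explicit $C^1$ function. Its partial derivatives satisfy
\[
s\,\partial_s\varphi(s,t)=\langle I'(su^++tu^-),su^+\rangle,\qquad t\,\partial_t\varphi(s,t)=\langle I'(su^++tu^-),tu^-\rangle .
\]
Hence, for $s,t>0$, $su^++tu^-\in\mathcal M$ if and only if $(s,t)$ is an interior critical point of $\varphi$. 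The plan has three steps: $\varphi$ is coercive to $-\infty$, its maximum is attained in the open quadrant, and that interior maximizer is the only point of $\mathcal M$ on the fiber.

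\emph{Step 1 (coercivity).} By $(F_3)$, $F(\tau)=\frac l4\tau^4+F_\infty(\tau)$ with $F_\infty(\tau)=o(\tau^4)$ as $|\tau|\to\infty$. Since $\overline\Omega$ is finite, as $s^2+t^2\to\infty$ we get
\[
\varphi(s,t)=\frac b4\|su^++tu^-\|^4-\frac l4\Big(s^4\!\int_\Omega|u^+|^4+t^4\!\int_\Omega|u^-|^4\Big)+O(s^2+t^2)+o(s^4+t^4).
\]
The quartic part is $\frac14 Q(s,t)$, where
\[
Q(s,t)=b\|su^++tu^-\|^4-l\Big(s^4\int|u^+|^4+t^4\int|u^-|^4\Big).
\]
We need $Q<0$ on the unit quarter circle. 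Expanding $\|su^++tu^-\|^2=s^2\|u^+\|^2+t^2\|u^-\|^2+st\mathcal W_u$ and comparing with the two defining inequalities of $E$, which read
\[
b\|u\|^2\big(\|u^\pm\|^2+\tfrac12\mathcal W_u\big)<l\int|u^\pm|^4,
\]
this is exactly what the two conditions of $E$ are meant to control. This is the main obstacle. A clean way is to note that $Q$ is $4$-homogeneous and to check the sign of $\frac{d}{d\rho}\varphi(\rho s,\rho t)$ for large $\rho$. That derivative equals $\rho^{-1}\big(\langle I'(w),w^+\rangle+\langle I'(w),w^-\rangle\big)$ with $w=\rho(su^++tu^-)$, so it suffices to show that the quantities $\langle I'(w),w^\pm\rangle$ are eventually negative. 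Writing $A=\|u^+\|^2+\frac12\mathcal W_u$, $B=\|u^-\|^2+\frac12\mathcal W_u$, $P=\int|u^+|^4$, $N=\int|u^-|^4$, the $E$ conditions say $b(A+B)A<lP$ and $b(A+B)B<lN$, and one has $\|su^++tu^-\|^2\le s^2A+t^2B$ because $\mathcal W_u st\le \frac12\mathcal W_u(s^2+t^2)$. I would try to show
\[
b(s^2A+t^2B)^2<l(s^4P+t^4N).
\]
By Cauchy--Schwarz,
\[
(s^2A+t^2B)^2\le (A+B)\Big(\frac{s^4A^2}{A}\cdot\frac1{\,}\Big)\ \text{type bounds, i.e. } (s^2A+t^2B)^2\le (A+B)(s^4A+t^4B),
\]
and combining this with the $E$ inequalities gives the claim. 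Since $Q$ is continuous on the compact arc, $Q\le-\delta<0$ there, so $\varphi\to-\infty$ and $\varphi$ attains a global maximum on $[0,\infty)^2$.

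\emph{Step 2 (interior maximum).} By $(F_1)$ and $(F_2)$, $|F(\tau)|\le\varepsilon\tau^2+C_\varepsilon|\tau|^p$, so $I(w)\ge\frac a4\|w\|^2$ for small $\|w\|$. Thus $\varphi>0$ near the origin (away from it), and $\sup\varphi>0$, which rules out a maximizer at $(0,0)$. To rule out a boundary maximizer such as $(s_0,0)$ with $s_0>0$, I would show $\partial_t\varphi(s_0,t)>0$ for small $t>0$. We have
\[
\partial_t\varphi(s_0,t)=a\big(t\|u^-\|^2+\tfrac12 s_0\mathcal W_u\big)+b(\cdots)\big(t\|u^-\|^2+\tfrac12 s_0\mathcal W_u\big)-t^{-1}\!\int f(tu^-)tu^-,
\]
where the last term is $o(t)$ by $(F_2)$. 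The positive term $\frac a2 s_0\mathcal W_u$ therefore dominates; if $\mathcal W_u=0$, the term $at\|u^-\|^2$ dominates instead. Hence the maximizer $(s_u,t_u)$ is interior, and $s_uu^++t_uu^-\in\mathcal M$.

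\emph{Step 3 (uniqueness).} Suppose $(s_1,t_1)\neq(s_u,t_u)$ also gives $v:=s_1u^++t_1u^-\in\mathcal M$. Apply Corollary \ref{cor3.4} to $v$ with $(s,t)=(s_u/s_1,t_u/t_1)$. Since $(s,t)\neq(1,1)$, the term $\frac{a(1-\theta_0)}4\big[(1-s^2)^2\|v^+\|^2+(1-t^2)^2\|v^-\|^2\big]$ is strictly positive, so $I(v)>I(s_uu^++t_uu^-)$. Exchanging the roles of the two points gives the reverse strict inequality, a contradiction. This yields the unique pair $(s_u,t_u)$.
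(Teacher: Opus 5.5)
Your proof is correct. The uniqueness step is the same as the paper's: apply Lemma~\ref{lem3.2} (equivalently Corollary~\ref{cor3.4}) in both directions and get two contradictory strict inequalities. The existence part, however, takes a genuinely different route.

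The paper never studies $I(su^++tu^-)$ globally. It works with $G(s,t)=\langle I'(su^++tu^-),su^+\rangle$ and $H(s,t)=\langle I'(su^++tu^-),tu^-\rangle$ and checks their signs only on the diagonal:
\begin{itemize}
\item $G(\alpha,\alpha),H(\alpha,\alpha)>0$ for small $\alpha$, by $(F_1)$--$(F_2)$;
\item $G(\beta,\beta),H(\beta,\beta)<0$ for large $\beta$, because the $t^4$-coefficient of $G(t,t)=\langle I'(tu),tu^+\rangle$ is exactly $b\|u\|^2\int\Gamma(u,u^+)\,d\mu-l\int|u^+|^4\,d\mu$, which is negative by the definition of $E$.
\end{itemize}
It then transfers these signs to the sides of $[\alpha,\beta]^2$, using that $G$ is nondecreasing in $t$ and $H$ in $s$ (all interaction terms are nonnegative). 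It concludes with the Poincar\'e--Miranda theorem.

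You instead maximize $\varphi$ on the closed quadrant. This needs negativity of the quartic form $Q$ in every direction, not just along $s=t$. Your reduction $\|su^++tu^-\|^2\le s^2A+t^2B$, followed by $(s^2A+t^2B)^2\le(A+B)(s^4A+t^4B)$, supplies exactly this. It shows that the diagonal condition defining $E$ already forces $Q<0$ on $[0,\infty)^2\setminus\{(0,0)\}$. The displayed Cauchy--Schwarz line in your write-up is garbled, and the detour through $\frac{d}{d\rho}\varphi(\rho s,\rho t)$ is unnecessary, but the final inequality is correct and suffices.

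Your exclusion of boundary maximizers via $\partial_t\varphi(s_0,t)\ge at\|u^-\|^2+\frac a2 s_0\mathcal W_u-o(t)>0$ for small $t>0$ is also sound. It correctly covers the case $\mathcal W_u=0$, which can occur when the supports of $u^+$ and $u^-$ are not adjacent.

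The trade-off is as follows. The paper's argument needs only four sign checks plus monotonicity, with no coercivity or boundary analysis. Yours avoids a topological theorem and produces $(s_u,t_u)$ directly as the global maximizer of $(s,t)\mapsto I(su^++tu^-)$. So $I(s_uu^++t_uu^-)=\max_{s,t\ge0}I(su^++tu^-)$ comes out of the construction rather than from Corollary~\ref{cor3.6}. With that in hand your uniqueness step could be shortened: $\varphi(s_1,t_1)\le\varphi(s_u,t_u)$ already contradicts the strict inequality from Corollary~\ref{cor3.4}.
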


\begin{proof}[\bf{Proof}]
First, we prove the existence of a pair $(s_u,t_u)$ for any $u\in \mathcal{H}^{1,2}_0(\Omega)$ with $u^\pm\neq 0$. By Proposition 3.2 in \cite{Ou 2024}, we know that
\begin{eqnarray}\label{a10}
G(s,t)
&:=&
\left\langle I'(su^++tu^-),su^+\right\rangle \nonumber\\
&=&
as^2\|u^+\|^2
+bs^4\|u^+\|^4
-\int_\Omega f(su^+)su^+\,d\mu
+\frac{ast}{2}\mathcal W_u
+\frac{3bs^3t}{2}\|u^+\|^2\mathcal W_u
+\frac{bst^3}{2}\|u^-\|^2\mathcal W_u
\nonumber\\
&&
+\frac{bs^2t^2}{2}
\left[
\sum_{x\in\Omega^-}
\sum_{\substack{y\sim x\\y\in\Omega^+}}
w_{xy}u^-(x)u^+(y)
\right]^2
+\frac{bs^2t^2}{2}
\left[
\sum_{x\in\Omega^+}
\sum_{\substack{y\sim x\\y\in\Omega^-}}
w_{xy}u^-(y)u^+(x)
\right]^2
\nonumber\\
&&
+bs^2t^2\|u^+\|^2\|u^-\|^2
+bs^2t^2
\sum_{x\in\Omega^+}
\sum_{\substack{y\sim x\\y\in\Omega^-}}
w_{xy}u^-(y)u^+(x)
\sum_{x\in\Omega^-}
\sum_{\substack{y\sim x\\y\in\Omega^+}}
w_{xy}u^-(x)u^+(y).
\end{eqnarray}
and
\begin{eqnarray}\label{a11}
H(s,t)
&:=&
\left\langle I'(su^++tu^-),tu^-\right\rangle \nonumber\\
&=&
at^2\|u^-\|^2
+bt^4\|u^-\|^4
-\int_\Omega f(tu^-)tu^-\,d\mu
+\frac{ast}{2}\mathcal W_u
+\frac{3bst^3}{2}\|u^-\|^2\mathcal W_u
+\frac{bs^3t}{2}\|u^+\|^2\mathcal W_u
\nonumber\\
&&
+\frac{bs^2t^2}{2}
\left[
\sum_{x\in\Omega^-}
\sum_{\substack{y\sim x\\y\in\Omega^+}}
w_{xy}u^-(x)u^+(y)
\right]^2
+\frac{bs^2t^2}{2}
\left[
\sum_{x\in\Omega^+}
\sum_{\substack{y\sim x\\y\in\Omega^-}}
w_{xy}u^-(y)u^+(x)
\right]^2
\nonumber\\
&&
+bs^2t^2\|u^+\|^2\|u^-\|^2
+bs^2t^2
\sum_{x\in\Omega^+}
\sum_{\substack{y\sim x\\y\in\Omega^-}}
w_{xy}u^-(y)u^+(x)
\sum_{x\in\Omega^-}
\sum_{\substack{y\sim x\\y\in\Omega^+}}
w_{xy}u^-(x)u^+(y).
\end{eqnarray}
By \((F_1)\) and \((F_2)\), for any \(\xi>0\), there exists
\(C_\xi>0\) such that
\begin{equation}\label{a12}
|f(\tau)\tau|
\leq
\xi|\tau|^2+C_\xi|\tau|^p,
\quad \forall\,\tau\in\mathbb{R}.
\end{equation}
Note that $u^-(y)u^+(x)<0$ and $u^+(y)u^-(x)<0$. Then combining (\ref{a10}), (\ref{a12}) and Lemma \ref{lem 6.6}, there exist positive constants
$0<\xi_1<\frac{a}{K_2^2}$and \(C_{\xi_1}>0\) such that
\begin{align}\label{a13}
G(s,s)
\geq{}&
as^2\|u^+\|^2
+bs^4\|u^+\|^4
-\int_\Omega f(su^+)su^+\,d\mu
\notag\\
\geq{}&
as^2\|u^+\|^2
+bs^4\|u^+\|^4
-\xi_1s^2\int_\Omega|u^+|^2\,d\mu
-C_{\xi_1}s^p\int_\Omega|u^+|^p\,d\mu
\notag\\
={}&
as^2\|u^+\|^2
+bs^4\|u^+\|^4
-\xi_1s^2\|u^+\|_{L^2(\Omega)}^2
-C_{\xi_1}s^p\|u^+\|_{L^p(\Omega)}^p
\notag\\
\geq{}&
as^2\|u^+\|^2
+bs^4\|u^+\|^4
-\xi_1s^2K_2^2\|u^+\|^2
-C_{\xi_1}s^pK_p^p\|u^+\|^p
\notag\\
={}&
\left(a-\xi_1K_2^2\right)s^2\|u^+\|^2
+bs^4\|u^+\|^4
-C_{\xi_1}s^pK_p^p\|u^+\|^p.
\end{align}
Then, it follows from \(a-\xi_1K_2^2>0\) and \(p\geq4\) that $G(s,s)>0$ for all sufficiently small \(s>0\). Similar to the argument used in (\ref{a13}), we can also obtain that $H(s,s)>0$ for \(s>0\) sufficiently small. Moreover, it follows from (\ref{a10}) that
\begin{eqnarray}\label{a14}
G(t,t)
& = &at^2\|u^+\|^2+bt^4\|u^+\|^4
-lt^4\int_\Omega|u^+|^4\,d\mu
-\int_\Omega f_\infty(tu^+)tu^+\,d\mu
+\frac{at^2}{2}\mathcal{W}_u
\nonumber\\
&   &+\frac{3bt^4}{2}\|u^+\|^2\mathcal{W}_u
+\frac{bt^4}{2}\|u^-\|^2\mathcal{W}_u
+bt^4\|u^+\|^2\|u^-\|^2
\nonumber\\
&   &+\frac{bt^4}{2}
\left[
\sum_{x\in\Omega^-}
\sum\limits_{\substack{y\thicksim x\\y\in\Omega^+}}
w_{xy}u^-(x)u^+(y)
\right]^2
+\frac{bt^4}{2}
\left[
\sum_{x\in\Omega^+}
\sum\limits_{\substack{y\thicksim x\\y\in\Omega^-}}
w_{xy}u^-(y)u^+(x)
\right]^2
\nonumber\\
&   &+bt^4
\sum_{x\in\Omega^+}
\sum\limits_{\substack{y\thicksim x\\y\in\Omega^-}}
w_{xy}u^-(y)u^+(x)
\sum_{x\in\Omega^-}
\sum\limits_{\substack{y\thicksim x\\y\in\Omega^+}}
w_{xy}u^-(x)u^+(y).
\end{eqnarray}
Then, by (\ref{c1}), (\ref{a14}) and condition $(F_3)$, we obtain that
\begin{align}\label{a15}
\limsup_{|t|\to\infty}\frac{G(t,t)}{t^4}
={}&
b\|u^+\|^4
-l\int_\Omega |u^+|^4\,d\mu
+b\|u^+\|^2\|u^-\|^2
+\frac{3b}{2}\|u^+\|^2\mathcal W_u
+\frac{b}{2}\|u^-\|^2\mathcal W_u
\notag\\
&+
\frac{b}{2}
\left[
\sum_{x\in\Omega^-}
\sum\limits_{\substack{y\thicksim x\\y\in\Omega^+}}
w_{xy}u^-(x)u^+(y)
\right]^2
+
\frac{b}{2}
\left[
\sum_{x\in\Omega^+}
\sum\limits_{\substack{y\thicksim x\\y\in\Omega^-}}
w_{xy}u^-(y)u^+(x)
\right]^2
\notag\\
&+
b
\sum_{x\in\Omega^+}
\sum\limits_{\substack{y\thicksim x\\y\in\Omega^-}}
w_{xy}u^-(y)u^+(x)
\sum_{x\in\Omega^-}
\sum\limits_{\substack{y\thicksim x\\y\in\Omega^+}}
w_{xy}u^-(x)u^+(y)
\notag\\
={}&
b\|u\|^2
\int_{\Omega\cup\partial\Omega}
\Gamma(u,u^+)\,d\mu
-l\int_\Omega |u^+|^4\,d\mu
<0 .
\end{align}
Thus, (\ref{a15}) implies that $G(t,t)<0$ for \(t>0\) large enough. By a similar argument to that used in (\ref{a15}), we also obtain that $H(t,t)<0$ for \(t>0\) large enough. Consequently, there exist constants $0<\alpha<\beta$ such that
\begin{equation}\label{a16}
G(\alpha,\alpha)>0,\qquad
H(\alpha,\alpha)>0,\qquad
G(\beta,\beta)<0,\qquad
H(\beta,\beta)<0.
\end{equation}
For every fixed \(s>0\), it is easy to see that the function \(t\mapsto G(s,t)\) is nondecreasing on \([0,+\infty)\). Similarly, for every fixed \(t>0\), the function \(s\mapsto H(s,t)\) is also nondecreasing on \([0,+\infty)\). Then, for all $s,t\in[\alpha,\beta]$, it follows from
(\ref{a10}), (\ref{a11}), and (\ref{a16}) that
\begin{align}\label{a17}
G(\alpha,t)
\geq{}G(\alpha,\alpha)>0,\qquad
G(\beta,t)\leq G(\beta,\beta)<0,\qquad
H(s,\alpha)\geq H(\alpha,\alpha)>0,\qquad
H(s,\beta)\leq H(\beta,\beta)<0.
\end{align}
In view of the continuity of \(f\) and the finiteness of the graph domain \(\Omega\), both \(G\) and \(H\) are continuous on \([\alpha,\beta]\times[\alpha,\beta]\). Therefore, the mapping
$
(G,H):[\alpha,\beta]\times[\alpha,\beta]
\longrightarrow
\mathbb{R}\times\mathbb{R}
$
is continuous. Thus, combining (\ref{a17}) with the Poincar\'{e}-Miranda theorem \cite{Kulpa 1997}, we obtain a point $(s_u,t_u)\in(\alpha,\beta)\times(\alpha,\beta)$ satisfying $G(s_u,t_u)=H(s_u,t_u)=0,$
which implies that $(s_u,t_u)$ is a pair of positive numbers such that $s_uu^++t_uu^-\in\mathcal{M}$.
\par
Next, we prove the uniqueness of $(s_u,t_u)$. Suppose, on the contrary,
that there exist two distinct pairs of positive numbers $(s_1,t_1)$ and
$(s_2,t_2)$ such that $(s_1,t_1)\neq(s_2,t_2)$,
$s_1u^++t_1u^-\in\mathcal{M}$, and
$s_2u^++t_2u^-\in\mathcal{M}$. By Lemma \ref{lem3.2}, taking
$u :=s_1u^++t_1u^-$, $s :=\frac{s_2}{s_1}$, and
$t :=\frac{t_2}{t_1}$, we have
\begin{align}\label{a19}
&I(s_1u^+ + t_1u^-)
\notag\\
\geq{}&
I(s_2u^+ + t_2u^-)
+\frac{a(1-\theta_0)(s_1^2-s_2^2)^2}{4s_1^2}\|u^+\|^2
+\frac{a(1-\theta_0)(t_1^2-t_2^2)^2}{4t_1^2}\|u^-\|^2
\notag\\
&+
\frac{a(s_2t_1-s_1t_2)^2}{4s_1t_1}\mathcal W_u
+
\frac{b(s_2^2t_1^2-s_1^2t_2^2)^2}{4s_1^2t_1^2}
\|u^+\|^2\|u^-\|^2
+
\frac{b(s_2^2t_1^2-s_1^2t_2^2)^2}{8s_1^2t_1^2}
\mathcal W_u^2
\notag\\
&+
\frac{b(s_2t_1-s_1t_2)^2
(3s_2^2t_1^2+2s_1s_2t_1t_2+s_1^2t_2^2)}
{8s_1t_1^3}
\|u^+\|^2\mathcal W_u
\notag\\
&+
\frac{b(s_2t_1-s_1t_2)^2
(3s_1^2t_2^2+2s_1s_2t_1t_2+s_2^2t_1^2)}
{8s_1^3t_1}
\|u^-\|^2\mathcal W_u
\notag\\
>&I(s_2u^+ + t_2u^-).
\end{align}
Similarly, by taking $u:=s_2u^++t_2u^-$, $s:=\frac{s_1}{s_2}$, and $t:=\frac{t_1}{t_2}$, we obtain that
\begin{align}\label{a20}
&I(s_2u^+ + t_2u^-)
\notag\\
\geq{}&
I(s_1u^+ + t_1u^-)
+\frac{a(1-\theta_0)(s_2^2-s_1^2)^2}{4s_2^2}\|u^+\|^2
+\frac{a(1-\theta_0)(t_2^2-t_1^2)^2}{4t_2^2}\|u^-\|^2
\notag\\
&+
\frac{a(s_1t_2-s_2t_1)^2}{4s_2t_2}\mathcal W_u
+
\frac{b(s_1^2t_2^2-s_2^2t_1^2)^2}
{4s_2^2t_2^2}
\|u^+\|^2\|u^-\|^2
+
\frac{b(s_1^2t_2^2-s_2^2t_1^2)^2}
{8s_2^2t_2^2}
\mathcal W_u^2
\notag\\
&+
\frac{b(s_1t_2-s_2t_1)^2
(3s_1^2t_2^2+2s_1s_2t_1t_2+s_2^2t_1^2)}
{8s_2t_2^3}
\|u^+\|^2\mathcal W_u
\notag\\
&+
\frac{b(s_1t_2-s_2t_1)^2
(3s_2^2t_1^2+2s_1s_2t_1t_2+s_1^2t_2^2)}
{8s_2^3t_2}
\|u^-\|^2\mathcal W_u
\notag\\
>{}&
I(s_1u^+ + t_1u^-).
\end{align}
Both (\ref{a19}) and (\ref{a20}) imply $(s_1,t_1)=(s_2,t_2)$. That is, there exists a unique pair of positive numbers $(s_u,t_u)$ such that $s_uu^++t_uu^-\in\mathcal{M}$. The proof is complete.
\end{proof}

\begin{lemma}\label{lem3.12}
{\it Suppose that \((F_1)-(F_4)\) hold. Then
\[
\inf_{u\in\mathcal{M}} I(u)
=:m
=
\inf_{u\in E}\max_{s,t\geq0} I(su^+ + tu^-).
\]}
\end{lemma}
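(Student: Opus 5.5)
We prove the two inequalities between $m=\inf_{\mathcal M}I$ and $m^*:=\inf_{u\in E}\max_{s,t\geq0}I(su^++tu^-)$ separately. Everything follows from Corollaries \ref{cor3.6} and \ref{cor3.10} together with Lemma \ref{lem3.11}, so no new estimates are needed.

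\emph{Step 1: $m^*\le m$.} Take any $u\in\mathcal M$. By Corollary \ref{cor3.10}, $u\in E$. By Corollary \ref{cor3.6}, $I(u)=\max_{s,t\geq0}I(su^++tu^-)$. Hence $m^*\le I(u)$, and taking the infimum over $\mathcal M$ gives $m^*\le m$.

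\emph{Step 2: $m\le m^*$.} Fix $u\in E$. Lemma \ref{lem3.11} provides $(s_u,t_u)$ with $s_u,t_u>0$ such that $w:=s_uu^++t_uu^-\in\mathcal M$. Since $s_u,t_u>0$, the positive and negative parts of $w$ are $w^+=s_uu^+$ and $w^-=t_uu^-$. Therefore
\[
\{sw^++tw^-:s,t\ge0\}=\{su^++tu^-:s,t\ge0\}.
\]
Applying Corollary \ref{cor3.6} to $w$ gives
\[
m\le I(w)=\max_{s,t\geq0}I(sw^++tw^-)=\max_{s,t\geq0}I(su^++tu^-).
\]
Taking the infimum over $u\in E$ yields $m\le m^*$.

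\emph{Remaining checks.} First, the maximum over $s,t\ge0$ is actually attained, since Corollary \ref{cor3.6} identifies it with the value $I(w)$ at the point $(s_u,t_u)$. Second, $\mathcal M\neq\emptyset$, which holds because $E\ne\emptyset$ (shown before Lemma \ref{lem3.9}) and Lemma \ref{lem3.11} applies to any element of $E$. So both infima are over nonempty sets.

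The only point needing care is the reparametrisation in Step 2. The identity $(s_uu^++t_uu^-)^\pm=(s_u u^+,\,t_u u^-)$ relies on $s_u,t_u$ being strictly positive, so that the sets $\Omega^\pm$ are unchanged. That positivity is exactly what Lemma \ref{lem3.11} guarantees.
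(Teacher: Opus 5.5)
Your proof is correct and follows the paper's argument: Step 1 is exactly the paper's combination of Corollaries \ref{cor3.6} and \ref{cor3.10}, and Step 2 rests on Lemma \ref{lem3.11} just as the paper does. The only difference is that the paper obtains $m\le\max_{s,t\geq0}I(su^++tu^-)$ simply by evaluating at the point $(s_u,t_u)$, so your reparametrisation $w^+=s_uu^+$, $w^-=t_uu^-$ is not needed for the inequality, though it has the side benefit of showing that the maximum is actually attained.
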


\begin{proof}[\bf{Proof}]
On the one hand, it follows from Corollary \ref{cor3.6} and Corollary \ref{cor3.10} that
\begin{equation}\label{a21}
\inf_{u\in E}\max_{s,t\geq0} I(su^+ + tu^-)
\leq
\inf_{u\in\mathcal{M}}\max_{s,t\geq0} I(su^+ + tu^-)
=
\inf_{u\in\mathcal{M}} I(u)
=
m.
\end{equation}
On the other hand, for any \(u\in E\), Lemma \ref{lem3.11} yields
\[
\max_{s,t\geq0} I(su^+ + tu^-)
\geq
I(s_u u^+ + t_u u^-)
\geq
\inf_{u\in\mathcal{M}} I(u)
=
m.
\]
Therefore, we deduce that
\begin{equation}\label{a22}
\inf_{u\in E}\max_{s,t\geq0} I(su^+ + tu^-)
\geq
\inf_{u\in\mathcal{M}} I(u)
=
m.
\end{equation}
Combining (\ref{a21}) and (\ref{a22}), we conclude the desired result.
\end{proof}

\begin{lemma}\label{lem3.13}
{\it Suppose that \((F_3)\) holds. Then}
\[
\frac{1}{4}f(\tau)\tau-F(\tau)
+\frac{a\theta_{0}\lambda_{1}}{4}\tau^{2}\geq 0,
\quad \forall\,\tau\in\mathbb{R}.
\]
\end{lemma}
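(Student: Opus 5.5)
The plan is to specialize the pointwise inequality \eqref{a1}, obtained in the proof of Lemma \ref{lem3.2} from $(F_3)$, to the value $t=0$. Recall that \eqref{a1} states that for all $t\ge 0$ and $\tau\neq 0$,
\[
\frac{1-t^{4}}{4}f(\tau)\tau+F(t\tau)-F(\tau)+\frac{a\theta_{0}\lambda_{1}}{4}(1-t^{2})^{2}\tau^{2}\ge 0 .
\]
Setting $t=0$ and using $F(0)=0$ gives exactly
\[
\frac14 f(\tau)\tau-F(\tau)+\frac{a\theta_0\lambda_1}{4}\tau^2\ge 0,\qquad \tau\neq0 .
\]
At $\tau=0$ the left-hand side vanishes, since $F(0)=0$. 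This covers all $\tau\in\mathbb{R}$.

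Because the integrand in \eqref{a1} contains $f(s\tau)/(s\tau)^3$ and $(s\tau)^{-2}$, which are singular at $s=0$, I would rather re-derive the $t=0$ case directly than quote \eqref{a1} blindly. Fix $\tau\neq0$. For $s\in(0,1]$, apply \eqref{a2} with $t=s$; since $\operatorname{sign}(1-s)\ge0$ on this range, it reads
\[
\frac{f(\tau)}{\tau^3}-\frac{f(s\tau)}{(s\tau)^3}+\frac{a\theta_0\lambda_1(1-s^2)}{(s\tau)^2}\ge0 .
\]
At $s=1$ the bracket is simply $0$. Multiply by $s^3\tau^4\ge0$ to get the integrand
\[
f(\tau)\tau\, s^3-f(s\tau)\tau+a\theta_0\lambda_1(1-s^2)s\tau^2 .
\]
This expression is continuous on $[0,1]$, so the singularity disappears after multiplication. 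Integrating over $s\in[0,1]$ and using $\int_0^1 f(s\tau)\tau\,ds=F(\tau)$ together with $\int_0^1(1-s^2)s\,ds=\tfrac14$ yields
\[
\frac14 f(\tau)\tau-F(\tau)+\frac{a\theta_0\lambda_1}{4}\tau^2\ge0 .
\]

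Finally, \eqref{a2} itself follows from $(F_3)(II)$ because the cubic parts cancel: $\frac{l\tau^3}{\tau^3}-\frac{l(t\tau)^3}{(t\tau)^3}=0$, so replacing $f_\infty$ by $f$ leaves the expression unchanged.

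The only delicate point is the integrability near $s=0$. It is resolved by multiplying by $s^3\tau^4$ before integrating, which produces a continuous integrand. No further obstacle is expected.
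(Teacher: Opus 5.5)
Your argument is correct and is the same as the paper's, which obtains the lemma in one line by setting $t=0$ in \eqref{a1}. Your direct re-derivation just makes explicit details the paper leaves implicit: multiplying \eqref{a2} by $s^3\tau^4$ gives an integrand continuous on $[0,1]$, and the case $\tau=0$ is handled separately.
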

\begin{proof}[\bf{Proof}]
The result follows immediately from (\ref{a1}) by setting $t=0$.
\end{proof}
\vskip2mm
\noindent

Define a set \(G\) as follows:
\begin{equation}\label{c2}
G=
\left\{
u\in \mathcal{H}_0^{1,2}(\Omega)\setminus\{0\}:
b\|u\|^4<l\int_{\Omega}|u|^4\,d\mu
\right\}.
\end{equation}
Similar to the argument for the set $E$, it is easy to see that $l>b\mu_0$, which implies that $G$ is nonempty.
\begin{lemma}\label{lem3.14}
{\it Suppose that \((F_1)-(F_4)\) hold. If \(u\in \mathcal{H}_0^{1,2}(\Omega)\setminus\{0\}\) and \(\left\langle I'(u),u\right\rangle\leq0\), then \(u\in G\).}
\end{lemma}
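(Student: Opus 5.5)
The argument follows the pattern of Lemma \ref{lem3.9}, with the whole function $u$ in place of $u^\pm$; the graph cross-terms $\mathcal W_u$ never enter. Let $u\in\mathcal{H}_0^{1,2}(\Omega)\setminus\{0\}$ with $\langle I'(u),u\rangle\le 0$. By (\ref{b4}) with $v=u$, this reads
\[
a\|u\|^2+b\|u\|^4\le \int_\Omega f(u)u\,d\mu .
\]
Using $(F_3)$ we split $f(u)u=l|u|^4+f_\infty(u)u$ and rearrange:
\[
b\|u\|^4-l\int_\Omega|u|^4\,d\mu\le \int_\Omega f_\infty(u)u\,d\mu-a\|u\|^2 .
\]

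To bound the right-hand side we use Lemma \ref{lem3.8}, which gives $f_\infty(\tau)\tau\le a\theta_0\lambda_1\tau^2$ for all $\tau$. Integrating this against $\mu$ over $\Omega$ and applying the Poincar\'e inequality $\lambda_1\int_\Omega|u|^2d\mu\le\|u\|^2$ (the same step that produced (\ref{c3}), now for $u$ itself), we get
\[
\int_\Omega f_\infty(u)u\,d\mu\le a\theta_0\|u\|^2 .
\]
Substituting this into the previous display yields
\[
b\|u\|^4-l\int_\Omega|u|^4\,d\mu\le a(\theta_0-1)\|u\|^2 .
\]

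The final step is to show the right-hand side is strictly negative. Since $\theta_0\in(0,1)$ and $a>0$, it suffices that $\|u\|>0$. This holds because $\|\cdot\|$ is a genuine norm on $\mathcal{H}_0^{1,2}(\Omega)$: if $\|u\|=0$, then $u$ is constant on connected pieces, and the boundary condition $u|_{\partial\Omega}=0$ with $\partial\Omega\neq\emptyset$ forces $u\equiv0$. Lemma \ref{lem 6.6} gives the same conclusion quantitatively, since $\|u\|_{L^\infty}\le K_\infty\|u\|$. Hence $b\|u\|^4<l\int_\Omega|u|^4\,d\mu$, which means $u\in G$. No step is a real obstacle; the only point requiring care is this strictness, which relies on $u\ne0$.
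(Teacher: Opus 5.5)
Your proposal is correct and essentially matches the paper's proof: both rewrite $\langle I'(u),u\rangle\le 0$ as $a\|u\|^2+b\|u\|^4\le\int_\Omega f(u)u\,d\mu$, split $f$ via $(F_3)$, bound the $f_\infty$ part by $a\theta_0\|u\|^2$ using Lemma \ref{lem3.8} and the Poincar\'e inequality, and conclude $b\|u\|^4-l\int_\Omega|u|^4\,d\mu\le a(\theta_0-1)\|u\|^2<0$. Your remark that $\|u\|>0$ (via Lemma \ref{lem 6.6}) only makes explicit the strict inequality the paper states without comment.
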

\begin{proof}[\bf{Proof}]
For any \(u\in \mathcal{H}_0^{1,2}(\Omega)\) satisfying \(u\neq 0\) and
$\langle I'(u),u\rangle\leq 0$, we first obtain
\[
a\|u\|^2+b\|u\|^4
\leq
\int_{\Omega}f(u)u\,d\mu .
\]
By condition \((F_3)\) and Lemma \ref{lem3.8}, it follows that
\[
a\|u\|^2+b\|u\|^4
\leq
\int_{\Omega}f(u)u\,d\mu
\leq
l\int_{\Omega}|u|^4\,d\mu
+
a\theta_0\|u\|^2 .
\]
Hence,
\[
b\|u\|^4-l\int_{\Omega}|u|^4\,d\mu
\leq
a(\theta_0-1)\|u\|^2<0.
\]
This completes the proof.
\end{proof}
By the definition of \(\mathcal N\) and Lemma \ref{lem3.14}, we directly obtain the following corollary.
\begin{corollary}\label{cor3.15}
{\it Suppose that \((F_1)-(F_4)\) hold. Then \(\mathcal N\subset G\).}
\end{corollary}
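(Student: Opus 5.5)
This is immediate from Lemma \ref{lem3.14}, so I would argue directly from the definitions.

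Let $u\in\mathcal N$. By the definition of $\mathcal N$ in Theorem 1.2, $u\in\mathcal H_0^{1,2}(\Omega)$, $u\neq0$, and $\langle I'(u),u\rangle=0$. In particular $\langle I'(u),u\rangle\le 0$, so $u$ satisfies the hypotheses of Lemma \ref{lem3.14}. That lemma gives $u\in G$, that is,
\[
b\|u\|^4<l\int_\Omega|u|^4\,d\mu .
\]
Since $u\in\mathcal N$ was arbitrary, $\mathcal N\subset G$.

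There is no real obstacle here. The only thing to watch is that Lemma \ref{lem3.14} really uses only $\langle I'(u),u\rangle\le0$ together with $(F_3)$, via Lemma \ref{lem3.8} and the Poincar\'e inequality, and that $\theta_0<1$ makes the final inequality strict even when $b=0$. For $b=0$ the conclusion reads $0<l\int_\Omega|u|^4\,d\mu$, which holds because $u\neq0$ and $l>0$, so the case $b=0$ is consistent.
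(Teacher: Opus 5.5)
Your proposal is correct and is exactly the paper's argument: membership in $\mathcal N$ gives $u\neq0$ and $\langle I'(u),u\rangle=0\le 0$, so Lemma~\ref{lem3.14} applies directly and yields $u\in G$. Your side remarks, that $\theta_0<1$ gives the strict inequality and that the case $b=0$ is consistent, are also accurate.
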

\begin{lemma}\label{lem3.16}
{\it Suppose that \((F_1)-(F_4)\) hold. If \(u\in G\), there exists a unique \(t_u>0\) such that \(t_uu\in\mathcal N\).}
\end{lemma}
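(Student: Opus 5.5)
Fix $u\in G$ and set $g(t):=\langle I'(tu),tu\rangle$ for $t>0$. Expanding with (\ref{b4}) gives
\[
g(t)=at^2\|u\|^2+bt^4\|u\|^4-\int_\Omega f(tu)tu\,d\mu .
\]
Since $\Omega$ is finite and $f$ is continuous, $g$ is continuous on $[0,\infty)$. The plan is to show that $g$ is positive near $0$ and negative for large $t$, which gives existence by the intermediate value theorem. Uniqueness will then come from Lemma \ref{lem3.3}, in the same way Lemma \ref{lem3.11} was handled.

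\emph{Small $t$.} Use (\ref{a12}) with $0<\xi_1<a/K_2^2$ together with Lemma \ref{lem 6.6}, as in (\ref{a13}):
\[
g(t)\ge (a-\xi_1K_2^2)t^2\|u\|^2+bt^4\|u\|^4-C_{\xi_1}K_p^pt^p\|u\|^p .
\]
This is positive for small $t>0$, because $p\ge 4$ and, even when $p=4$, the $t^2$ term dominates.

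\emph{Large $t$.} By $(F_3)$ we can write
\[
\frac{g(t)}{t^4}=\frac{a\|u\|^2}{t^2}+b\|u\|^4-l\int_\Omega|u|^4\,d\mu-\int_\Omega\frac{f_\infty(tu)u}{t^3}\,d\mu .
\]
At vertices where $u(x)\ne0$, $(F_3)(I)$ gives
\[
\frac{f_\infty(tu(x))u(x)}{t^3}=\frac{f_\infty(tu(x))}{(tu(x))^3}\,u(x)^4\to0 ,
\]
and the sum is finite. Hence $g(t)/t^4\to b\|u\|^4-l\int_\Omega|u|^4\,d\mu<0$ by the definition (\ref{c2}) of $G$. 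So $g(t)<0$ for $t$ large, and there is some $t_u>0$ with $g(t_u)=0$, i.e.\ $t_uu\in\mathcal N$.

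\emph{Uniqueness.} Suppose $t_1\neq t_2$ are both positive with $t_iu\in\mathcal N$. Apply Lemma \ref{lem3.3} (equivalently Corollary \ref{cor3.5}) to $w=t_1u\in\mathcal N$ with $t=t_2/t_1$:
\[
I(t_1u)\ge I(t_2u)+\frac{a(1-\theta_0)}{4}\Bigl(1-\frac{t_2^2}{t_1^2}\Bigr)^2t_1^2\|u\|^2>I(t_2u),
\]
since $\theta_0<1$, $u\ne0$ and $t_1\ne t_2$. Swapping the roles of $t_1$ and $t_2$ gives $I(t_2u)>I(t_1u)$, a contradiction. Hence $t_u$ is unique.

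The only step that needs any care is the large-$t$ limit, specifically justifying that the $f_\infty$ term is $o(t^4)$. This is immediate here because $\Omega$ is a finite set, so no dominated-convergence argument is required. I expect no real obstacle.
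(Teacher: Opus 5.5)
Your proposal is correct and follows the paper's proof essentially step for step: you use the same function $\langle I'(tu),tu\rangle$, get positivity near $0$ from (\ref{a12}) and Lemma \ref{lem 6.6}, obtain the negative limit of $g(t)/t^4$ from the definition of $G$, conclude existence by the intermediate value theorem, and prove uniqueness by applying Corollary \ref{cor3.5} in both directions. Your vertex-by-vertex justification that the $f_\infty$ term is $o(t^4)$ is slightly more careful than the paper, which simply asserts that limit.
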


\begin{proof}[\bf{Proof}]
First, we establish the existence of \(t_u\). For any \(u\in G\), define the function
\(\varphi(t)=\left\langle I'(tu),tu\right\rangle \text{ on } (0,+\infty)\).
It follows from (\ref{a12}) and Lemma \ref{lem 6.6} that there exist positive constants
\(0<\xi_2<\frac{a}{K_2^2}\) and \(C_{\xi_2}>0\) such that
\begin{eqnarray*}
\varphi(t)
&=& at^2\|u\|^2+bt^4\|u\|^4-\int_{\Omega}f(tu)tu\,d\mu  \\
&\geq&
at^2\|u\|^2+bt^4\|u\|^4
-\xi_2t^2\int_{\Omega}|u|^2\,d\mu
-C_{\xi_2}t^p\int_{\Omega}|u|^p\,d\mu  \\
&=&
at^2\|u\|^2+bt^4\|u\|^4
-\xi_2t^2\|u\|_{L^2(\Omega)}^2
-C_{\xi_2}t^p\|u\|_{L^p(\Omega)}^p  \\
&\geq&
at^2\|u\|^2+bt^4\|u\|^4
-\xi_2K_2^2t^2\|u\|^2
-C_{\xi_2}K_p^pt^p\|u\|^p  \\
&=&
(a-\xi_2K_2^2)t^2\|u\|^2+bt^4\|u\|^4
-C_{\xi_2}K_p^pt^p\|u\|^p .
\end{eqnarray*}
Then, since \(a-\xi_2K_2^2>0\) and \(p\geq4\), we have \(\varphi(t)>0\) for sufficiently small \(t>0\).

On the other hand, by applying \((F_3)\) together with (\ref{c2}), we obtain
\begin{eqnarray*}
\varphi(t)
&=& at^2\|u\|^2+bt^4\|u\|^4-\int_{\Omega}f(tu)tu\,d\mu  \\
&=& at^2\|u\|^2+bt^4\|u\|^4
-lt^4\int_{\Omega}|u|^4\,d\mu-\int_{\Omega} f_{\infty}(tu)tu\,d\mu  \\
\end{eqnarray*}
and
\begin{align*}
\limsup_{|t| \to \infty} \frac{\varphi(t)}{t^4} &= \limsup_{|t| \to \infty} \left[ \frac{a\|u\|^2}{t^2} + b\|u\|^4 - l \int_\Omega |u|^4 d\mu - \frac{\int_{\Omega} f_{\infty}(tu)tu\,d\mu}{t^4} \right] \\
&= b\|u\|^4 - l \int_\Omega |u|^4 d\mu < 0.
\end{align*}
Then, it follows from \(u\in G\) that there exists a sufficiently large \(t>0\) such that \(\varphi(t)<0\). By the continuity of \(\varphi(t)\) on \((0,+\infty)\), there exists \(t_u>0\) satisfying
$\varphi(t_u)=0$, which implies that \(t_uu\in\mathcal N\).

Next, we prove the uniqueness of $t_u$. Assume, by contradiction, that there exist $u\in G$ and two distinct positive real numbers $t_1,t_2$ satisfying
$t_1\neq t_2$, $t_1u\in\mathcal{N}$, and $t_2u\in\mathcal{N}$.
By Corollary \ref{cor3.5}, taking $u:=t_1u$ and
$t:=\frac{t_2}{t_1}$, we have
\[
I(t_1u)
\geq
I(t_2u)
+
\frac{a(1-\theta_0)}{4}
\left[1-\left(\frac{t_2}{t_1}\right)^2\right]^2
t_1^2\|u\|^2
>
I(t_2u).
\]
Similarly, by taking \(u:=t_2u\) and \(t:=\dfrac{t_1}{t_2}\), we obtain that
\[
I(t_2u)
\geq
I(t_1u)
+
\frac{a(1-\theta_0)}{4}
\left[1-\left(\frac{t_1}{t_2}\right)^2\right]^2
t_2^2\|u\|^2
>
I(t_1u).
\]
This contradiction shows that \(t_1=t_2\). Therefore, the positive number \(t_u\) satisfying \(t_uu\in\mathcal N\) is unique.
\end{proof}
\begin{lemma}\label{lem3.17}
{\it Suppose that \((F_1)-(F_4)\) hold. Then}
\[
\inf_{u\in\mathcal N}I(u):=c
=
\inf_{u\in G}\max_{t\geq0}I(tu).
\]
\end{lemma}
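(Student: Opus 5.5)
The argument mirrors the proof of Lemma \ref{lem3.12}, with $E$, $\mathcal M$ and the two-parameter family $su^++tu^-$ replaced by $G$, $\mathcal N$ and the one-parameter family $tu$. I will prove the two inequalities between $c$ and $\inf_{u\in G}\max_{t\ge0}I(tu)$ separately.

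\emph{Upper bound.} By Corollary \ref{cor3.15}, $\mathcal N\subset G$, so the infimum over $G$ is at most the infimum over $\mathcal N$. For every $u\in\mathcal N$, Corollary \ref{cor3.7} gives $\max_{t\ge0}I(tu)=I(u)$. Hence
\[
\inf_{u\in G}\max_{t\geq0}I(tu)\le\inf_{u\in\mathcal N}\max_{t\ge0}I(tu)=\inf_{u\in\mathcal N}I(u)=c .
\]

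\emph{Lower bound.} Fix $u\in G$. Lemma \ref{lem3.16} provides $t_u>0$ with $t_uu\in\mathcal N$. Therefore
\[
\max_{t\ge0}I(tu)\ge I(t_uu)\ge c .
\]
Taking the infimum over $u\in G$ gives the reverse inequality.

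\emph{Attainment of the maximum.} It remains to check that $\max_{t\ge0}I(tu)$ is actually attained, so that writing ``max'' is legitimate. Since $I(tu)=\frac a2t^2\|u\|^2+\frac b4t^4\|u\|^4-\int_\Omega F(tu)\,d\mu$ and $F(\tau)=\frac l4\tau^4+o(\tau^4)$ by $(F_3)(I)$, we get
\[
\frac{I(tu)}{t^4}\to\frac14\Bigl(b\|u\|^4-l\int_\Omega|u|^4\,d\mu\Bigr)<0 \quad (t\to\infty)
\]
for $u\in G$. So $I(tu)\to-\infty$, and the continuous function $t\mapsto I(tu)$ attains its maximum on $[0,\infty)$. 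Alternatively, one can apply Corollary \ref{cor3.7} to $t_uu$, which shows the maximum equals $I(t_uu)$. This attainment check is the only step with any content; the rest is bookkeeping from the earlier results.
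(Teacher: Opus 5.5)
Your proof is correct and follows the paper's argument: the upper bound comes from $\mathcal N\subset G$ (Corollary \ref{cor3.15}) together with $\max_{t\ge0}I(tu)=I(u)$ on $\mathcal N$ (Corollary \ref{cor3.7}), and the lower bound comes from the Nehari scaling $t_uu\in\mathcal N$ of Lemma \ref{lem3.16}. Your extra check that $\max_{t\ge0}I(tu)$ is attained for every $u\in G$ is also correct, whether done through the $t^4$-asymptotics or by applying Corollary \ref{cor3.7} to $t_uu$; the paper leaves this point implicit.
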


\begin{proof}[\bf{Proof}]
On the one hand, it follows from Corollary \ref{cor3.7} and Corollary \ref{cor3.15} that

\begin{equation}\label{a23}
\inf_{u\in G}\max_{t\geq0}I(tu)
\leq
\inf_{u\in\mathcal N}\max_{t\geq0}I(tu)
=
\inf_{u\in\mathcal N}I(u)
=c .
\end{equation}
On the other hand, for any \(u\in G\), Lemma \ref{lem3.16} implies
\[
\max_{t\geq0}I(tu)
\geq
I(t_u u)
\geq
\inf_{u\in\mathcal N}I(u)
=c .
\]
Therefore,
\begin{equation}\label{a24}
\inf_{u\in G}\max_{t\geq0}I(tu)
\geq
\inf_{u\in\mathcal N}I(u)
=c .
\end{equation}
Combining (\ref{a23}) and (\ref{a24}), we conclude the result.
\end{proof}

\begin{lemma}\label{lem3.18}
{\it Suppose that \((F_1)-(F_4)\) hold. Then  \(m>0\) and \(c>0\) can be achieved.}
\end{lemma}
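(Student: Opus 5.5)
We prove that $m>0$ and $c>0$ and that both infima are attained, using the finite dimensionality of $\mathcal{H}_0^{1,2}(\Omega)$ (Lemma \ref{lem 6.6}), so that bounded minimizing sequences converge strongly.

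\emph{Nonemptiness and upper bounds.} Since $E\neq\emptyset$, Lemma \ref{lem3.11} shows that $\mathcal M\neq\emptyset$, so $m<+\infty$. Likewise $G\neq\emptyset$, so Lemma \ref{lem3.16} gives $\mathcal N\neq\emptyset$ and $c<+\infty$.

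\emph{Positive lower bounds.} We first show that elements of $\mathcal N$ and of $\mathcal M$ stay away from $0$. For $u\in\mathcal N$, the identity $\langle I'(u),u\rangle=0$ together with (\ref{a12}) and $\xi<a/K_2^2$ gives
\[
(a-\xi K_2^2)\|u\|^2\le C_\xi K_p^p\|u\|^p,
\]
so $\|u\|\ge\rho>0$. For $u\in\mathcal M$, we use $\langle I'(u),u^\pm\rangle=0$, drop the nonnegative $b$ and $\mathcal W_u$ terms (as in (\ref{a13})), and apply (\ref{c5}). This yields $a\|u^\pm\|^2\le\int_\Omega f(u^\pm)u^\pm\,d\mu$, and the same estimate gives $\|u^\pm\|\ge\rho$. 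Next, by Lemma \ref{lem3.13} and the Poincar\'e inequality, for $u\in\mathcal N$,
\[
I(u)=I(u)-\tfrac14\langle I'(u),u\rangle=\tfrac a4\|u\|^2+\int_\Omega\Big[\tfrac14 f(u)u-F(u)\Big]d\mu\ge\tfrac{a(1-\theta_0)}4\|u\|^2.
\]
Hence $c\ge a(1-\theta_0)\rho^2/4>0$. The same computation with Lemma \ref{lem3.2} at $s=t=0$ works on $\mathcal M$: there $I(0)=0$ and the two derivative terms vanish, so
\[
I(u)\ge\tfrac{a(1-\theta_0)}4\big(\|u^+\|^2+\|u^-\|^2\big)\ge\tfrac{a(1-\theta_0)}2\rho^2.
\]
Thus $m>0$. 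This argument also shows that any minimizing sequence is bounded, and, more usefully, that $\|u_n^\pm\|$ is bounded.

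\emph{Attainment.} Take $u_n\in\mathcal M$ with $I(u_n)\to m$. By the bound above, $\{u_n\}$ is bounded, so Lemma \ref{lem 6.6} gives a subsequence with $u_n\to\tilde u$ strongly and pointwise on the finite set $\Omega$. Hence $u_n^\pm\to\tilde u^\pm$. Since $\|u_n^\pm\|\ge\rho$, we get $\tilde u^\pm\neq0$. Every expression involved, namely $I$, $\langle I'(\cdot),\cdot^\pm\rangle$ and $\mathcal W_u$, is a continuous function of finitely many values, so we can pass to the limit. This gives $\langle I'(\tilde u),\tilde u^\pm\rangle=0$, so $\tilde u\in\mathcal M$ and $I(\tilde u)=m$. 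The argument for $c$ is the same, with $\|\bar u\|\ge\rho$ ensuring $\bar u\neq0$.

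The main obstacle is the uniform lower bound $\|u^\pm\|\ge\rho$ on $\mathcal M$. It is needed so that the limit $\tilde u$ keeps both signs and does not degenerate into a constant-sign function or zero. It depends on correctly handling the cross terms in $\langle I'(u),u^\pm\rangle$ on graphs. I would obtain it by writing $\langle I'(u),u^+\rangle$ via (\ref{a10}) with $s=t=1$, noting that all the mixed terms are nonnegative, and then applying (\ref{a12}).
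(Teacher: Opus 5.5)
Your proposal is correct and takes essentially the same route as the paper: bound minimizing sequences via $I(u)-\frac14\langle I'(u),u\rangle\ge\frac{a(1-\theta_0)}{4}\|u\|^2$, get $\|u_n^\pm\|\ge\rho$ from the growth estimate (\ref{a12}) after discarding the nonnegative cross terms, use compactness in the finite-dimensional space, and pass to the limit in $\langle I'(u_n),u_n^\pm\rangle=0$. The only cosmetic difference is that you obtain $m,c>0$ from a uniform lower bound over all of $\mathcal M$ and $\mathcal N$ before proving attainment, whereas the paper reads positivity off $I(\tilde u)$ and $I(\bar u)$ after the minimizers are found.
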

\begin{proof}[\bf{Proof}]
By Lemma \ref{lem3.11}, for any $u\in E$, there exists a unique pair
$(s_u,t_u)\in(0,+\infty)\times(0,+\infty)$ such that
$s_u u^+ + t_u u^-\in\mathcal M$. Hence, \(\mathcal{M}\neq\emptyset\). Let $\{u_n\}\subset \mathcal{M}$ be such that $I(u_n)\rightarrow m$. By (\ref{b3}), (\ref{b4}) and Lemma \ref{lem3.13}, it holds that
\begin{eqnarray}\label{h1}
m+o(1)
&= &I(u_n)-\frac{1}{4}\langle I'(u_n),u_n\rangle \notag\\
&= &\frac{a}{2}\|u_n\|^2+\frac{b}{4}\|u_n\|^4
-\int_{\Omega}F(u_n)\,d\mu
-\frac{a}{4}\|u_n\|^2
-\frac{b}{4}\|u_n\|^4
+\frac{1}{4}\int_{\Omega}f(u_n)u_n\,d\mu \notag\\
&= &\frac{a}{4}\|u_n\|^2
+\frac{1}{4}\int_{\Omega}f(u_n)u_n\,d\mu
-\int_{\Omega}F(u_n)\,d\mu \notag\\
&\geq&
\frac{a}{4}\|u_n\|^2
-\frac{a\theta_0\lambda_1}{4}
\int_{\Omega}|u_n|^2\,d\mu \notag\\
&\geq&
\frac{a}{4}\|u_n\|^2
-\frac{a\theta_0}{4}\|u_n\|^2 \notag\\
&=&
\frac{a(1-\theta_0)}{4}\|u_n\|^2 .
\end{eqnarray}
This implies that the sequence $\{u_n\}$ is bounded in $\mathcal{H}^{1,2}_0(\Omega)$. Hence, there exists a constant $M>0$ such that $\|u_n\|\le M$. Since $\mathcal{H}^{1,2}_0(\Omega)$ is finite-dimensional, Lemma \ref{lem 6.6} yields a subsequence, still denoted by $\{u_n\}$, and a function $\tilde{u}\in \mathcal{H}^{1,2}_0(\Omega)$ such that $u_n\rightarrow\tilde{u}$ in $\mathcal{H}^{1,2}_0(\Omega)$. Furthermore, by Lemma \ref{lem 6.6}, we have
\begin{eqnarray}\label{a26}
\begin{cases}
u_n^\pm \to \tilde{u}^\pm &\text{in}\;\mathcal{H}^{1,2}_0(\Omega),\\
u_n^\pm(x)\to \tilde{u}^\pm(x) & \mbox{for each } x\in \Omega,\\
u_n(x)\to \tilde{u}(x) & \mbox{for each } x\in \Omega,\\
u_n^\pm\rightarrow\tilde{u}^\pm & \text{in} \; L^s(\Omega), s\in[2,+\infty).
\end{cases}
\end{eqnarray}
Since $\{u_n\}\subset \mathcal{M}$, it follows from the definition of \(\mathcal{M}\) that $\langle I'(u_n),u_n^\pm\rangle=0$. Thus, Proposition 3.2 in \cite{Ou 2024} yields
\begin{align}\label{a27}
a\|u_n^{\pm}\|^2+b\|u_n^{\pm}\|^4
={}&
\int_{\Omega} f(u_n^\pm)u_n^\pm\,d\mu
-\frac{a}{2}\mathcal W_{u_n}
-b\|u_n^+\|^2\|u_n^-\|^2
-\frac{3b}{2}\|u_n^\pm\|^2\mathcal W_{u_n}
-\frac{b}{2}\|u_n^\mp\|^2\mathcal W_{u_n}
\notag\\
&-\frac{b}{2}
\left[
\sum_{x\in\Omega^-}
\sum_{\substack{y\thicksim x\\y\in\Omega^+}}
w_{xy}u_n^-(x)u_n^+(y)
\right]^2
-\frac{b}{2}
\left[
\sum_{x\in\Omega^+}
\sum_{\substack{y\thicksim x\\y\in\Omega^-}}
w_{xy}u_n^-(y)u_n^+(x)
\right]^2
\notag\\
&-b
\sum_{x\in\Omega^+}
\sum_{\substack{y\thicksim x\\y\in\Omega^-}}
w_{xy}u_n^-(y)u_n^+(x)
\sum_{x\in\Omega^-}
\sum_{\substack{y\thicksim x\\y\in\Omega^+}}
w_{xy}u_n^-(x)u_n^+(y).
\end{align}
It follows from (\ref{a12}) and (\ref{a27}) that there exist $\varepsilon_3\in (0,\frac{a}{K_2^2})$ and a constant $C_{\varepsilon_3}>0$ such that
\begin{align}\label{a28}
a\|u_n^\pm\|^2
&\leq a\|u_n^\pm\|^2+b\|u_n^\pm\|^4
\notag\\
&=
\int_{\Omega}f(u_n^\pm)u_n^\pm\,d\mu
-\frac{a}{2}\mathcal W_{u_n}
-b\|u_n^+\|^2\|u_n^-\|^2
-\frac{3b}{2}\|u_n^\pm\|^2\mathcal W_{u_n}
-\frac{b}{2}\|u_n^\mp\|^2\mathcal W_{u_n}
\notag\\
&\quad
-\frac{b}{2}
\left[
\sum_{x\in\Omega^-}
\sum_{\substack{y\thicksim x\\y\in\Omega^+}}
w_{xy}u_n^-(x)u_n^+(y)
\right]^2
-\frac{b}{2}
\left[
\sum_{x\in\Omega^+}
\sum_{\substack{y\thicksim x\\y\in\Omega^-}}
w_{xy}u_n^-(y)u_n^+(x)
\right]^2
\notag\\
&\quad
-b
\sum_{x\in\Omega^+}
\sum_{\substack{y\thicksim x\\y\in\Omega^-}}
w_{xy}u_n^-(y)u_n^+(x)
\sum_{x\in\Omega^-}
\sum_{\substack{y\thicksim x\\y\in\Omega^+}}
w_{xy}u_n^-(x)u_n^+(y)
\notag\\
&\leq
\varepsilon_3\|u_n^\pm\|_{L^{2}(\Omega)}^{2}
+C_{\varepsilon_3}\|u_n^\pm\|_{L^{p}(\Omega)}^{p}
\notag\\
&\leq
\varepsilon_3K_2^2\|u_n^\pm\|^{2}
+C_{\varepsilon_3}K_p^p\|u_n^\pm\|^{p}.
\end{align}
Thus, $\|u_n^\pm\|^{p-2}\geq \frac{a-\varepsilon_3 K_2^2}{C_{\varepsilon_3}K_p^p}>0$, which, together with (\ref{a26}), implies that $\tilde{u}^\pm\neq 0$. Notice that the integral of a function $u$ is defined as in (\ref{eq4}), and $\Omega$ consists of finitely many points. Then, by (\ref{b4}), (\ref{a26}), and (\ref{a27}), we obtain
\begin{align}\label{a29}
&a\|\tilde u^\pm\|^2+b\|\tilde u^\pm\|^4
\notag\\
={}&
\lim_{n\rightarrow\infty}a\|u_n^\pm\|^2
+\lim_{n\rightarrow\infty}b\|u_n^\pm\|^4
\notag\\
={}&
\lim_{n\rightarrow\infty}
\Bigg\{
\int_{\Omega} f(u_n^\pm)u_n^\pm\,d\mu
-\frac{a}{2}\mathcal W_{u_n}
-b\|u_n^+\|^2\|u_n^-\|^2
-\frac{3b}{2}\|u_n^\pm\|^2\mathcal W_{u_n}
-\frac{b}{2}\|u_n^\mp\|^2\mathcal W_{u_n}
\notag\\
&\qquad
-\frac{b}{2}
\left[
\sum_{x\in\Omega^-}
\sum_{\substack{y\sim x\\y\in\Omega^+}}
w_{xy}u_n^-(x)u_n^+(y)
\right]^2
-\frac{b}{2}
\left[
\sum_{x\in\Omega^+}
\sum_{\substack{y\sim x\\y\in\Omega^-}}
w_{xy}u_n^-(y)u_n^+(x)
\right]^2
\notag\\
&\qquad
-b
\sum_{x\in\Omega^+}
\sum_{\substack{y\sim x\\y\in\Omega^-}}
w_{xy}u_n^-(y)u_n^+(x)
\sum_{x\in\Omega^-}
\sum_{\substack{y\sim x\\y\in\Omega^+}}
w_{xy}u_n^-(x)u_n^+(y)
\Bigg\}
\notag\\
={}&
\int_{\Omega}f(\tilde u^\pm)\tilde u^\pm\,d\mu
-\frac{a}{2}\mathcal W_{\tilde u}
-b\|\tilde u^+\|^2\|\tilde u^-\|^2
-\frac{3b}{2}\|\tilde u^\pm\|^2\mathcal W_{\tilde u}
-\frac{b}{2}\|\tilde u^\mp\|^2\mathcal W_{\tilde u}
\notag\\
&\qquad
-\frac{b}{2}
\left[
\sum_{x\in\Omega^-}
\sum_{\substack{y\sim x\\y\in\Omega^+}}
w_{xy}\tilde u^-(x)\tilde u^+(y)
\right]^2
-\frac{b}{2}
\left[
\sum_{x\in\Omega^+}
\sum_{\substack{y\sim x\\y\in\Omega^-}}
w_{xy}\tilde u^-(y)\tilde u^+(x)
\right]^2
\notag\\
&\qquad
-b
\sum_{x\in\Omega^+}
\sum_{\substack{y\sim x\\y\in\Omega^-}}
w_{xy}\tilde u^-(y)\tilde u^+(x)
\sum_{x\in\Omega^-}
\sum_{\substack{y\sim x\\y\in\Omega^+}}
w_{xy}\tilde u^-(x)\tilde u^+(y).
\end{align}
Therefore, $\langle I'(\tilde u),\tilde u^\pm\rangle=0$, and together with
$\tilde u^\pm\neq0$, we obtain $\tilde u\in\mathcal M$. By taking $\tilde{s}=0$ and $\tilde{t}=0$ in Lemma \ref{lem3.2}, we have
\[
m=I(\tilde{u})
\geq
\frac{a(1-\theta_0)}{4}\|\tilde{u}^{+}\|^2
+
\frac{a(1-\theta_0)}{4}\|\tilde{u}^{-}\|^2
=
\frac{a(1-\theta_0)}{4}
\left(
\|\tilde{u}^{+}\|^2+\|\tilde{u}^{-}\|^2
\right)>0 .
\]

Similarly, we can prove that \(c>0\) is achieved. By Lemma \ref{lem3.16}, for any $u \in G$, there exists a unique \(t_u>0\) such that \(t_uu\in\mathcal N\). Therefore, \(\mathcal{N}\neq\emptyset\).
Let $\{u_n\}\subset \mathcal{N}$ be such that $I(u_n)\rightarrow c$. By a similar argument to that used in (\ref{h1}), we also obtain
\begin{equation}\label{h2}
c+o(1)=I(u_n)-\frac{1}{4}\langle I'(u_n),u_n\rangle
\geq
\frac{a(1-\theta_0)}{4}\|u_n\|^2 .
\end{equation}
This shows that the sequence $\{u_n\}$ is bounded in $\mathcal{H}^{1,2}_0(\Omega)$. Therefore, there exists a constant $C>0$ such that $\|u_n\|\le C$. Since $\mathcal{H}^{1,2}_0(\Omega)$ is finite-dimensional,  Lemma \ref{lem 6.6} allows us to extract a subsequence, still denoted by $\{u_n\}$, and there exists a function $\bar{u}\in \mathcal{H}^{1,2}_0(\Omega)$ such that
\begin{align}\label{ww2}
\left\{
\begin{aligned}
& u_n\rightarrow \bar{u} \mbox{\ in } \mathcal{H}^{1,2}_0(\Omega);\\
& u_n(x) \rightarrow \bar{u}(x)\ \ \mbox{for all }x\in \Omega.
\end{aligned}
\right.
\end{align}
Since $\{u_n\}\subset \mathcal{N}$, the definition of $\mathcal{N}$ gives $\langle I'(u_n),u_n\rangle=0$.  Consequently, (\ref{b4}) yields
\begin{align}\label{w2}
a\|u_n\|^2+b\|u_n\|^4
={}&
\int_{\Omega} f(u_n)u_n\,d\mu.
\end{align}
It follows from (\ref{a12}) and (\ref{w2}) that one can choose \(\varepsilon_4\in(0,\frac{a}{K_2^2})\) and a positive constant \(C_{\varepsilon_4}\) such that
\begin{align}\label{w3}
a\|u_n\|^2
&\leq a\|u_n\|^2+b\|u_n\|^4
\notag\\
&=
\int_{\Omega}f(u_n)u_n\,d\mu
\notag\\
&\leq
\varepsilon_4\|u_n\|_{L^{2}(\Omega)}^{2}
+C_{\varepsilon_4}\|u_n\|_{L^{p}(\Omega)}^{p}
\notag\\
&\leq
\varepsilon_4K_2^2\|u_n\|^{2}
+C_{\varepsilon_4}K_p^p\|u_n\|^{p}.
\end{align}
Thus, $\|u_n\|^{p-2}\geq \frac{a-\varepsilon_4 K_2^2}{C_{\varepsilon_4}K_p^p}>0$. In view of the pointwise convergence in (\ref{ww2}), we have $\bar{u}\neq 0$. Since \(\Omega\) consists of finitely many points, the integral in (\ref{eq4}) reduces to a finite sum. Consequently, by (\ref{b4}), (\ref{ww2}),  and (\ref{w2}), we obtain
\begin{align}\label{w4}
a\|\bar u\|^2+b\|\bar u\|^4
&=\lim_{n\rightarrow\infty}a\|u_n\|^2
+\lim_{n\rightarrow\infty}b\|u_n\|^4
=\lim_{n\rightarrow\infty}\int_{\Omega}f(u_n)u_n\,d\mu
=\int_{\Omega}f(\bar u)\bar u\,d\mu .
\end{align}
Consequently, $\langle I'(\bar u),\bar u\rangle=0$, which yields
$\bar u\in\mathcal N$. Note that $\bar{u}\neq0$. By taking $t=0$ in Lemma \ref{lem3.3}, we obtain
\[
c=I(\bar {u})
\geq
\frac{a(1-\theta_0)}{4}
\|\bar {u}\|^2
>0.
\]
The proof is complete.
\end{proof}

\begin{lemma}\label{lem3.19}
{\it Suppose that \((F_1)-(F_4)\) hold. The minimizers $\tilde{u}$ and $\bar{u}$ of \(\inf_{\mathcal{M}} I\) and \(\inf_{\mathcal{N}} I\) are critical points of \(I\).}
\end{lemma}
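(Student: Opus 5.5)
We argue by contradiction with a deformation-type argument, in the style of Cheng--Tang. The finite dimensionality of $\mathcal{H}_0^{1,2}(\Omega)$ lets us replace the quantitative deformation lemma by an elementary perturbation.

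\emph{The $\mathcal N$ case.} Suppose $I'(\bar u)\neq 0$. Then there exist $\delta>0$ and $\rho>0$ with $\|I'(v)\|\ge \rho$ for $\|v-\bar u\|\le 3\delta$, by continuity of $I'$. Consider the curve $\gamma(t)=t\bar u$ for $t\in[1/2,3/2]$ (shrunk if needed). By Corollary \ref{cor3.5},
\[
I(t\bar u)\le c-\frac{a(1-\theta_0)}{4}(1-t^2)^2\|\bar u\|^2 ,
\]
which is strictly below $c$ except at $t=1$. We deform the curve near $t=1$ by $\eta(t)=t\bar u-\epsilon\,\psi(t)\,w$, where $w$ is a direction with $\langle I'(\bar u),w\rangle>0$ (for instance $w=I'(\bar u)$ identified through the inner product) and $\psi$ is a cutoff equal to $1$ near $t=1$. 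A first-order Taylor expansion gives $\max_t I(\eta(t))<c$ for small $\epsilon$. Next, $\varphi(t)=\langle I'(\eta(t)),\eta(t)\rangle$ is positive at $t=1/2$ and negative at $t=3/2$ by Corollary \ref{cor3.5} (strict maximality at $t=1$ forces the sign change), and these signs survive the small perturbation. Hence there is $t^*$ with $\eta(t^*)\in\mathcal N$, and $I(\eta(t^*))<c$, a contradiction.

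\emph{The $\mathcal M$ case.} Here we use $\tilde u\in \mathcal M$ and the two-parameter family $\gamma(s,t)=s\tilde u^++t\tilde u^-$ on $D=[1/2,3/2]^2$. Corollary \ref{cor3.4} gives $I(\gamma(s,t))<m$ for $(s,t)\neq(1,1)$, with a uniform gap on $\partial D$. Assuming $I'(\tilde u)\ne0$, we perturb as before: $\eta(s,t)=\gamma(s,t)-\epsilon\psi(s,t)w$, with $\psi$ supported near $(1,1)$, so that $\max_D I\circ\eta<m$. It then remains to show that $\eta(D)\cap\mathcal M\neq\emptyset$. Consider the map
\[
\Psi(s,t)=\bigl(\langle I'(\eta),\eta^+\rangle,\ \langle I'(\eta),\eta^-\rangle\bigr).
\]
On $\partial D$ we have $\eta=\gamma$, since the cutoff vanishes there, and the unperturbed map $(G,H)$ from Lemma \ref{lem3.11} (with $u=\tilde u$) satisfies the Miranda sign conditions on $\partial D$. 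These are $G(1/2,t)>0$, $G(3/2,t)<0$, $H(s,1/2)>0$ and $H(s,3/2)<0$. They follow from the monotonicity of $t\mapsto G(s,t)$ and $s\mapsto H(s,t)$ noted in Lemma \ref{lem3.11}, together with the uniqueness of $(s_u,t_u)=(1,1)$ and the fact that $I(\gamma)$ attains its maximum only at $(1,1)$ (Corollary \ref{cor3.6}). The Poincar\'e--Miranda theorem then yields a zero of $\Psi$ in $D$.

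\textbf{Main obstacle.} For $\eta$ to lie in $\mathcal M$ we need $\eta^\pm\ne0$. Since $\eta$ is a small perturbation of a function with $\tilde u^\pm\ne0$ on a finite set, positive (negative) values persist for small $\epsilon$, so this holds. The genuinely delicate point is the continuity of $\Psi$: $v\mapsto v^\pm$ is continuous, but the $\mathcal W$-cross terms in $\langle I'(\eta),\eta^\pm\rangle$ must be handled through Proposition 3.2 of \cite{Ou 2024}. We avoid this by writing $\Psi$ directly via (\ref{b4}), which is continuous in $\eta$, and by verifying the boundary signs only on $\partial D$, where $\eta=\gamma$. The boundary signs themselves are the step needing care. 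We would establish them by showing $G(1/2,t)>0$ for $t\in[1/2,3/2]$: from monotonicity in $t$ it suffices to check $G(1/2,1/2)>0$, which should follow from Corollary \ref{cor3.4} applied along the diagonal ray, together with uniqueness in Lemma \ref{lem3.11}. Once the contradiction is obtained, $I'(\tilde u)=0$, and likewise $I'(\bar u)=0$.
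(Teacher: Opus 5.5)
Your overall scheme is sound: perturb the surface $\{s\tilde u^++t\tilde u^-\}$ near $(1,1)$ in a direction along which $I$ strictly decreases, then use Poincar\'e--Miranda to force the perturbed surface to meet $\mathcal M$. It is an elementary, finite-dimensional version of the deformation-plus-degree argument of Lemma 3.9 of Ou--Zhang, to which the paper simply defers.

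\textbf{The gap.} It lies in the step you flagged, the Miranda boundary signs on $\partial[1/2,3/2]^2$. Your reduction via monotonicity to the diagonal values $G(\tfrac12,\tfrac12)>0$, $G(\tfrac32,\tfrac32)<0$ (and likewise for $H$) is correct. However, these signs do not follow from Corollary~\ref{cor3.4} together with the uniqueness in Lemma~\ref{lem3.11}.
\begin{itemize}
\item Along the diagonal, $\frac{d}{d\sigma}I(\sigma\tilde u)=\bigl(G(\sigma,\sigma)+H(\sigma,\sigma)\bigr)/\sigma$, so strict maximality at $(1,1)$ only controls the sum $G+H$.
\item Uniqueness only says that $(G,H)$ has no other common zero.
\item A counterexample to the inference: $\Phi(s,t)=-\bigl((s-1)-k(t-1)\bigr)^2-(t-1)^2$ with $k>1$. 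It has a strict global maximum and its unique critical point at $(1,1)$. Its $s\partial_s\Phi$ is increasing in $t$ and $t\partial_t\Phi$ is increasing in $s$, the same cooperative monotonicity. Yet $\partial_s\Phi(\tfrac12,\tfrac12)=-(k-1)<0$.
\end{itemize}
So the structural condition $(F_3)(II)$ has to enter this step. The same remark applies, more mildly, to your $\mathcal N$ case: strict maximality at $t=1$ gives the signs of $\varphi$ at $\tfrac12,\tfrac32$ only together with the uniqueness in Lemma~\ref{lem3.16}.

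\textbf{Two easy repairs.}
\begin{itemize}
\item[(a)] Replace $[1/2,3/2]^2$ by the rectangle $[\alpha,\beta]^2$ constructed in the proof of Lemma~\ref{lem3.11} for $u=\tilde u$; this is admissible since $\tilde u\in\mathcal M\subset E$. There, (\ref{a17}) gives exactly the strict boundary signs. By uniqueness, the Miranda zero in $(\alpha,\beta)^2$ must be $(1,1)$, so $(1,1)$ is interior and your cutoff can be supported near it.
\item[(b)] Prove the diagonal signs directly. Apply (\ref{a2}) pointwise on $\{\tilde u>0\}$ with $t=\sigma$, and use the Poincar\'e inequality and $G(1,1)=0$. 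The $b$-terms cancel exactly because both scale like $\sigma^4$, and one obtains
\[
(1-\sigma^2)\,G(\sigma,\sigma)\ \ge\ a\sigma^2(1-\sigma^2)^2\Bigl[(1-\theta_0)\|\tilde u^+\|^2+\tfrac12\mathcal W_{\tilde u}\Bigr]>0,\qquad \sigma>0,\ \sigma\neq1,
\]
and symmetrically for $H$.
\end{itemize}
With either repair, the rest of your argument goes through: the strict decrease of $I$ under the cutoff perturbation, $\eta^\pm\neq0$ for small $\epsilon$, and the continuity of $\Psi$.
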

\begin{proof}[{\bf Proof}]
 We only need to note that $W_{\tilde u}>0$ in Lemma 3.1. Then the proof is the same as Lemma 3.9 in \cite{Ou 2024}. We omit the detail proofs.
\end{proof}
\vskip2mm
\noindent
\normalfont
{\bf Proof of Theorem 1.1.}
By Lemmas \ref{lem3.18} and \ref{lem3.19}, there exist
$\tilde{u}\in\mathcal{M}$ and $\bar{u}\in\mathcal{N}$ such that
$I(\tilde{u})=m$, $I'(\tilde{u})=0$, and
$I(\bar{u})=c$, $I'(\bar{u})=0$.
Moreover, it follows from Corollary \ref{cor3.6}, Lemma \ref{lem3.16}
and Proposition 3.2 in \cite{Ou 2024} that there exist positive constants
$s_u$ and $t_u$ satisfying
$s_u\tilde{u}^+\in\mathcal{N}$ and $t_u\tilde{u}^-\in\mathcal{N}$.
Consequently, we have
\begin{eqnarray*}
        m
&  =   &  I(\tilde{u})=\max_{s,t\geq0} I(s\tilde{u}^++t\tilde{u}^-)\\
& > &  \max_{s,t\geq0} \big[I(s\tilde{u}^+)+I(t\tilde{u}^-)\big]\\
&  =   &  \max_{s\geq0} I(s\tilde{u}^+)+\max_{t\geq0} I(t\tilde{u}^-)\\
& \geq &  I(s_u\tilde{u}^+)+I(t_u\tilde{u}^-)\\
& \geq &  I(\bar{u})+I(\bar{u})\\
& \geq &  2c>0.
\end{eqnarray*}
This completes the proof of Theorem 1.1.
\normalfont
\vskip3mm
\noindent{\bf Funding information}

\noindent
This work is supported by Scientific Research Fund of Yunnan Provincial Department of Education in China (grant No. 2025J0085), Philosophy and Social Sciences Academic Excellence Cultivation Program of Kunming University of Science and Technology in China (grant No. JPSC2025006),  Yunnan Fundamental Research Projects in China (grant No: 202501AS070170) and Yunnan Provincial Innovation Guidance and Technology-Based Enterprise Cultivation Program in China (grant No: 202404CC110017), and Interdisciplinary Research Special Program of Kunming University of Science and Technology (grant No. KUST-xk2026018).

\vskip3mm
\noindent{\bf Conflict of interest}

\noindent
On behalf of all authors, the corresponding author states that there is no conflict of interest.

\vskip3mm
\noindent{\bf Authors' contribution}

\noindent
The authors contribute to the main manuscript equally.

\vskip2mm
\renewcommand\refname{References}
\normalfont

\end{document}